\documentclass[10pt]{amsart}

\usepackage{tikz}
\usepackage{amsmath}
\usepackage{amssymb}
\usepackage{mathrsfs}
\usepackage[all]{xy}
\usepackage[T1]{fontenc}
 \usepackage{textcomp}
\usepackage{graphicx}
\usepackage{orcidlink}

\newtheorem{theorem}{Theorem}[section]

\newtheorem{lemma}[theorem]{Lemma}

\theoremstyle{definition}
\newtheorem{definition}[theorem]{Definition}

\theoremstyle{remark}

\newcommand{\Lauchli}{L{\"{a}}uchli}

\newcommand{\re}{\upharpoonright}
\newcommand{\ra}{\rightarrow}

\newcommand{\cC}{\mathcal{C}}
\newcommand{\cD}{\mathcal{D}}

\DeclareMathOperator{\ran}{ran}
\DeclareMathOperator{\dom}{dom}

\newcommand{\om}{\omega}

\newcommand{\sse}{\subseteq}
\newcommand{\contains}{\supseteq}

\DeclareMathOperator{\Lev}{Lev}
\DeclareMathOperator{\AC}{AC}
\DeclareMathOperator{\Con}{Con}

\title[Big Ramsey degrees of Henson graphs in $\mathrm{ACA}_0$]{
The finite big Ramsey degrees of Henson graphs are provable in  $\mathrm{ACA}_0$}
\date{}

\author{Peter Cholak}
\address{University of Notre Dame}
\email{cholak@nd.edu}
\author{Natasha Dobrinen}
\address{University of Notre Dame}
\email{ndobrine@nd.edu}
\author{Henry Towsner}
\address{University of Pennsylvania}
\email{htowsner@math.upenn.edu}

\subjclass[2020]{03D80, 03E75, 05C55, 05C10}

\thanks{Cholak was partially supported by NSF-DMS-2502292 and Erwin Schr{\"o}dinger Institute. 
ORCID (Cholak):
\href{https://orcid.org/0000-0002-6547-5408}{\orcidlink{0000-0002-6547-5408}{0000-0002-6547-5408}}. Dobrinen
was partially supported by NSF-DMS-2300896 and Erwin Schr{\"o}dinger
Institute. ORCID (Dobrinen):
\href{https://orcid.org/0000-0001-6360-0001}{\orcidlink{0000-0001-6360-0001}{0000-0001-6360-0001}}. Towsner
was partially supported by NSF-DMS-2054379 and NSF-DMS-2452009 and Erwin Schr{\"o}dinger Institute.}

\keywords{Henson graphs, big Ramsey degrees, ACA$_0$}
\begin{document}

\begin{abstract} 
Let $\mathbb{H}_{n+1}$ denote a computable copy of the $(n+1)$-clique free universal homogeneous Henson graph,  $G$ denote a finite subgraph of $\mathbb{H}_{n+1}$, and $k(G,n)$ denote the big Ramsey degree of $G$ in $\mathbb{H}_{n+1}$.
We prove  that for any computable coloring $\chi$ of the copies of $G$ in $\mathbb{H}_{n+1}$, there is a copy $\mathbb{H}'$ of $\mathbb{H}_{n+1}$ 
that is computable from $0^{(2\delta(G,n)-1)}$
in which $\chi$ takes no more than $k(G,n)$ colors, where 
 $\delta(G,n)$, see Definition~\ref{defn.deltaG},
 denotes the maximum number of levels of a diary for $G$ in $\mathbb{H}_{n+1}$ (this is a finite number).
It follows that 
 the statement,
``Henson graphs have finite big Ramsey degrees," is provable in ACA$_0'$.  
Combining this with a recent result of  Cholak, Dobrinen, and McCoy \cite{CDM} yields the equivalence of  
 the statement with    ACA$_0'$ over RCA$_0$.

\end{abstract}

\maketitle

\section{Introduction}

Suppose we have a finite $(n+1)$-clique free graph $G$ ($n\geq 2$) and let $\chi$ be finite coloring of the copies of $G$ in 
the universal homogeneous  $(n+1)$-clique free graph, denoted by $\mathbb{H}_{n+1}$ and called a Henson graph. 
The structure $\mathbb{H}_{n+1}$ has finite big Ramsey degrees \cite{MR4128725, MR4568266}, which means there is a $k$ (depending only on $n$ and $G$) such that for all $\chi$ there is a subcopy $\mathbb{H}'$ of $\mathbb{H}_{n+1}$ so that $\chi$ takes on at most $k$ colors on the copies of $G$ in $\mathbb{H}'$.
The least such $k$ is called the {\em big Ramsey degree} of $G$ in $\mathbb{H}_{n+1}$, denoted here as in \cite{CDM} by $k(G,n)$.
These exact numbers were characterized in \cite{Balko7} in terms of so-called diaries.

In this paper, we show that when  $\mathbb{H}_{n+1}$ and  $\chi$ are computable,  then $\mathbb{H}'$ is computable from $0^{(2 \delta(G,n)-1)}$.
Here,
 $\delta(G,n)$ denotes the maximum number of levels in  a diary for $G$ in $\mathbb{H}_{n+1}$, see Definition \ref{defn.deltaG}.
 $\delta(G,n)$ is computable by work in \cite{Balko7}, and is roughly bounded  by 
$2|G|(n-1)\sum_{1\le k\le n}{|G|\choose k}$.
Hence, for each fixed $G$ this proof can be carried out in $\mathsf{ACA}_0$.
This improves on a result  in \cite{MR4602735} in two ways: first, extending the statement from the triangle-free graph $\mathbb{H}_3$ to all $\mathbb{H}_{n+1}$, and second, carrying out the proof in $\mathsf{ACA}_0$ instead of $\mathsf{ACA}^+_0$. Liu and Patey \cite{LP} have also
recently shown the existence of $\mathbb{H}'$ in $\mathsf{ACA}_0$  for $n=2$ (without explicit bounds on the degree)  by a different route, showing that the Ordered Variable Word theorem is provable in $\mathsf{ACA}_0$ and applying a theorem in \cite{Hubicka_CS20}. By \cite{CDM},  for each $n\ge 2$ and  each $G$, where $|G|\geq 2$, there is a computable coloring $\chi$ so that any $\mathbb{H}'$ in which $\chi$ takes on at most  $k(G,n)$ colors computes $0^{(|G|-1)}$.  Thus,  for each $n\ge 2$,  the full statement ``$\mathbb{H}_{n+1}$ has finite big Ramsey degrees (for all $G$)'' is equivalent to $\mathsf{ACA}_0'$. 
We point out that  there is a gap between the lower bound $0^{(|G|-1)}$ found in \cite{CDM}
and our upper bound $0^{(2 \delta(G,n)-1)}$.
See Section 4 for further discussion.

Finite big Ramsey degree results for Henson graphs were first proved via coding tree arguments  developed  by the second author in \cite{MR4128725}  and further  extended in \cite{MR4568266}. 
In these arguments, one works with a representation of $\mathbb{H}_{n+1}$ as a certain subset of vertices in a binary tree, with the edge relation encoded in the branching of the tree. 
A key point is that  each finite subgraph  $G$ of 
$\mathbb{H}_{n+1}$
can be produced by only 
  finitely many configurations in this tree.
Set-theoretic forcing methods are applied to do unbounded searches for certain finite monochromatic sets, the proof taking place in the ground model, with the following consequence:
Given a finite coloring of the copies of $G$  in $\mathbb{H}_{n+1}$, we can pass to a subcopy of this tree in which the coloring is homogeneous on each of the finitely many configurations. 
By further passing to a copy of $\mathbb{H}_{n+1}$ which itself is a diary (as in \cite{Balko7}), 
there are at most $k(G,n)$ many colors:
 one for each diary representing $G$.

By now, several
 variants of  coding tree constructions and related forcing arguments have appeared in the literature, optimized for various applications (see \cite{CDP1, CDP2, CDW, Dobrinen_SDAP, DobrinenRado19, Dobrinen/Zucker23, MR4457343}).
In this paper, we use the version of coding tree from \cite{CDM}, which is a minor variation  of forms found in \cite{MR4128725, MR4568266}.
Section 2 presents  these
 coding trees  and diaries for Henson graphs.


In Section 3, we carry out the construction of monochromatic (on diaries) copies of $\mathbb{H}_{n+1}$. 
The main contribution here is 
Theorem \ref{thm.general}, 
which is a variant of the Halpern--\Lauchli\  Theorem suitable to coding trees for Henson graphs.
Our proof adapts  forcing arguments on coding trees (which use the Erd\H{o}s-Rado Theorem and hence  uncountable cardinals)
to the realm of computability theory.
This builds on the third author's arithmetical forcing proof of the Halpern--\Lauchli\ Theorem (unpublished).
Another novelty of our proof occurs in the induction steps of Lemma \ref{lem.3.12} and Theorem
\ref{thm.penultimatestep}.
A straightforward adaptation of the induction arguments in  \cite{MR4128725} and \cite{MR4568266}  would require  jumps of the form $0^{(n\cdot\omega)}$ for unbounded $n<\om$. 
Here, however, we dovetail the inductive applications of  Theorem \ref{thm.general}, yielding the following:

\begin{theorem}
[$\mathsf{ACA}_0$]\label{thm.upperbound}
Fix  $n\ge 2$.
Let $G$ be a finite subgraph of $\mathbb{H}_{n+1}$, and let $\delta(G,n)$ be the maximum height of a diary for $G$.  Then for any computable coloring of copies of $G$ in a computable copy of $\mathbb{H}_{n+1}$ into finitely many colors, there is a subcopy $\mathbb{H}'$ of $\mathbb{H}_{n+1}$ in which the color of $G$ is determined by its diary, and  $\mathbb{H}'$ is $0^{(2\delta(G,n)-1)}$ in the coloring.
\end{theorem}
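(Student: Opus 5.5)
The plan is to run the coding-tree argument of \cite{MR4128725, MR4568266} in the form of \cite{CDM}, replacing each set-theoretic forcing step by an arithmetical construction whose complexity we can bound. Work in the coding tree $\mathbb{T}$ representing the computable copy of $\mathbb{H}_{n+1}$. Each copy of $G$ in $\mathbb{H}_{n+1}$ is represented by a finite subtree of $\mathbb{T}$ whose similarity type is one of finitely many configurations. The computable coloring $\chi$ then induces a computable coloring of each configuration type. The goal is a subtree $T'\le\mathbb{T}$, itself a coding tree, on which $\chi$ is constant on each configuration type. Afterwards we pass, computably in $T'$, to a copy of $\mathbb{H}_{n+1}$ inside $T'$ that is a diary in the sense of \cite{Balko7}. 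In that copy every copy of $G$ appears in one of the diary shapes, so its color depends only on its diary. This final step adds no jumps.

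The core ingredient is the Halpern--\Lauchli-style Theorem \ref{thm.general} for coding trees. I would state it, and check it, in the following form: for a configuration with $\ell$ levels and coloring computable in $X$, the new top level can be homogenized, and the extension found, using two jumps of $X$ per level. The mechanism is the third author's arithmetical forcing argument. Rather than using Erd\H{o}s--Rado to pick an ultrafilter-like type, we decide, using one jump, whether a given color is \emph{dense} above a level. Using a second jump, we then search for extensions witnessing this. Because the configurations at a given level involve only finitely many nodes, these decisions are $\Sigma_2$ or $\Pi_2$ relative to the previous stage.

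Next, induct on the levels of the configuration, as in Lemma \ref{lem.3.12} and Theorem \ref{thm.penultimatestep}. Homogenizing the top $j$ levels for all configuration types simultaneously, with the lower levels still free, should cost $0^{(2j)}$ relative to the coloring. There are two key points here. First, all configuration types of $G$ have at most $\delta(G,n)$ levels, since their diaries have at most that many levels. Second, the inductive applications must be dovetailed: at each stage of building $T'$ we apply Theorem \ref{thm.general} only to the finitely many nodes currently relevant, instead of producing a full tree at each inductive level. The latter is what would force $0^{(n\cdot\omega)}$. With dovetailing, the construction of $T'$ is a single recursion in $0^{(2\delta(G,n)-2)}$ plus one final jump to choose colors and ensure the extensions exist. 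This gives $0^{(2\delta(G,n)-1)}$. Since $\delta(G,n)$ is a fixed standard number for fixed $G$, this is a finite iterate of the jump, available in $\mathsf{ACA}_0$.

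The main obstacle is the dovetailing bookkeeping. When we go down a level to homogenize configurations whose lowest level lies below the current one, the previously chosen homogeneous colors on higher levels must be preserved. This means the trees built at different inductive stages must be nested coherently, and we must verify that the oracle needed never exceeds $2\delta(G,n)-1$ jumps. My first attempt would be to fix in advance, with $0^{(2\delta-1)}$, a density assignment for every configuration type and every finite approximation. The construction would then be a single recursion that is computable in that assignment.
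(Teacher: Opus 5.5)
Your outer architecture is essentially the paper's. You push the color of a copy of a diary of $G$ down to shorter initial segments one level at a time, each reduction costing $0''$; this is Lemma~\ref{lem.3.12}, which applies Theorem~\ref{thm.general} at each level of the tree under construction to the finitely many copies of $\Delta_i^-$ there. You iterate this finitely often (Theorem~\ref{thm.penultimatestep}), save one jump because the last coloring is a pigeonhole on spine splitting nodes, and then pass to a diary copy. If you simply invoke Theorem~\ref{thm.general} as proved, that outline goes through. The genuine gap is the mechanism you propose for checking Theorem~\ref{thm.general}, which you call the core ingredient. The dichotomy ``either color $0$ is dense above the current tuple, or some cone is entirely color $1$'' is exactly the proof of Lemma~\ref{lem.endhomonecn}, where only one node is active. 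When the top level of $\Delta$ involves $d+1\ge 2$ nodes, each level $n\in N$ of $S$ requires more. You must extend a whole level set $V=\bigcup_{j\le d}V_j$ to a common length so that \emph{every} tuple from $\prod_{j\le d}V_j$ (without new age-changes) gets the same color. Extending one tuple to the right color moves nodes it shares with other tuples. Density on single tuples does not give this, already for $d=1$. Suppose that at each level every node lies in at most one color-$0$ pair. Color $0$ can still be dense on pairs: enumerate all pairs and designate one color-$0$ extension of each at a fresh level. Yet no level then contains a $2\times 2$ grid that is monochromatic in color $0$. So a positive answer to your density test does not tell you how to build the levels of $S$.

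What is missing is a density notion for arbitrarily large finite grids of copies, together with a proof that some color has it. The paper uses configurations whose indices may be $\infty$. An instantiation $(K,q,\rho)$ turns each $\infty$ coordinate into a large finite block of copies, and consistency of a configuration is a $\Pi_2$ property. Lemma~\ref{lem8} shows that if adding a new index with color $0$ is inconsistent, then the condition witnessing this makes color $1$ consistent. Lemma~\ref{lem9} obtains an $\infty$ coordinate from cofinitely constant least choices along fresh indices. Together they show that a consistent configuration containing $\vec\infty$ exists; $0''$ then finds one, and Lemma~\ref{lem.Dendhomog} builds $S$ computably from it. This is what replaces Erd\H{o}s--Rado, and it is the real obstacle, not the dovetailing bookkeeping. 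No global precomputed ``density assignment'' is needed, and none is well defined in advance: each lower-level coloring $\chi_i^-$ exists only relative to the tree produced by the previous reduction. Finally, homogenize only copies of the diaries of $G$, not all similarity types. Theorem~\ref{thm.general} handles one event per level, and non-diary types have no diary, so your bound of $\delta(G,n)$ levels is unjustified for them. They are also irrelevant once you pass to a diary copy.
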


It follows that 
 the statement,
``Henson graphs have finite big Ramsey degrees," is provable in ACA$_0'$.

\section{Coding Trees and Diaries for $\mathbb{H}_{n+1}$}\label{appendix}

This section provides background on coding trees and diaries.  
Original definitions can be found in 
\cite{MR4128725, MR4568266, Balko7}.
Here, we follow the  presentation in the Appendix of \cite{CDM}, reproducing 
 the minimum necessary for understanding our arguments in Section 3.

\begin{definition}[Standard Enumeration of $\mathbb{H}_{n+1}$]\label{defn.standardenum}
Fix $n\ge 2$. 
A {\em standard enumeration} of
$\mathbb{H}_{n+1}$  is a  computable enumeration of the vertices $\{\hat{v}_j:j<\om\}$ of 
$\mathbb{H}_{n+1}$ 
with the following additional property:
For each pair  $i<j<n$, $\hat{v}_j$ is connected to $\hat{v}_i$.
For each $j\ge n$, 
$\hat{v}_j$ forms an $n$-clique with its $n-1$ immediate predecessors; that is, 
all pairs of nodes among 
$\{\hat{v}_{j-n+1}, \dots,\hat{v}_j\}$ are connected. 
\end{definition}

The order on the nodes in  $\mathbb{H}_{n+1}$ induces a tree of binary sequences as follows:
For $j<\om$,  $\hat{v}_j$ is associated with   the
binary  sequence of length $j$, also denoted by $\hat{v}_j$, such that for $i<j$,  $\hat{v}_j(i)=1$ iff  $\hat{v}_j$ is connected to $\hat{v}_i$.
Thus, 
$\hat{v}_j(i)=0$ iff  
$\hat{v}_j$ is disconnected from $\hat{v}_i$.
Let $T\sse 2^{<\om}$ denote the collection of all initial segments of the binary sequences $\hat{v}_j$, $j<\om$.
Then  $T$ is a tree, ordered by initial segment  $\sse$.
The vertices of $\mathbb{H}_{n+1}$ appear  in $T$ as the nodes $\hat{v}_j$.
When referring to a  vertex  $\hat{v}_j$ as a  node in $T$, we will call it a {\em coding node} (as is common in the literature) or sometimes {\em vertex node} (as in \cite{CDM}).
We may also denote coding nodes by $c, c'$, etc.
Each subgraph $G=\{\hat{v}_j:j\in J\}\sse\mathbb{H}_{n+1}$ 
corresponds uniquely to the  subtree of $T$ consisting of all  initial segments of the vertex nodes $\{\hat{v}_j:j\in J\}$.
The tree $T$ 
has the following important 
property, which follows from the Extension Property of Henson graphs:
\begin{enumerate}
\item[$(*)$]
Given $k<\om$,
for each partition of 
$\{\hat{v}_j\mid j<k\}$ into two disjoint sets $V_0,V_1$  where $\mathbb{H}_{n+1}\upharpoonright V_1$ contains no $n$-clique,
there is a unique $s$ in $T$ of length $k$ so that for each $j\ge k$, 
$\hat{v}_j$ extends $s$ in $T$ iff 
$\hat{v}_j$ is disconnected with each vertex in $V_0$ and 
connected with every vertex in $V_1$.
\end{enumerate}


The tree structure $(T, \sse)$  with its lexicographic order and coding nodes is called a {\em coding tree} for $\mathbb{H}_{n+1}$ and forms the bedrock for the notion of diary.
The next notion involved in diaries are splitting nodes.
A node $s$ in a subset $S$ of $T$  is a {\em splitting node}  in $S$ iff both $s^{\frown}0$ and $s^{\frown}1$ are extended by some nodes in $S$. 
Diaries will correspond only to subgraphs of $\mathbb{H}_{n+1}$ in which each pair of its vertices $\hat{v}_j,\hat{v}_k$   splits, meaning that there is an $i<\min\{j,k\}$ so that exactly one of 
$\hat{v}_j(i),\hat{v}_k(j)$   equals one.

The third notion involved in diaries is that of  consecutive age-changes.
A set $X\sse T$ is called a {\em level set} iff $X\sse 2^\ell$ for some $\ell<\om$.
Given a level set $X$, let  $\ell(X)$ denote the length of its nodes.
When we write a level set as $X=\{x_i\mid i<p\}$, it is assumed that the $x_i$ are all distinct, so $X$ has size $p$.
For $\ell'<\ell(X)$, $X\upharpoonright\ell'$ denotes $\{x\upharpoonright\ell'\mid x\in X\}$, the collection of nodes in $X$ truncated to  length $\ell'$.
This of course may have smaller cardinality than $X$.

\begin{definition}\label{defn.Kmp}
For $1\le m< n$,
we define $K(m)$ to consist of those level sets 
 $X=\{x_i\mid i<p\}\sse T$ for which there is 
 an  $m$-clique  $\{\hat{v}_{k_q}\mid q<m\}$
where 
$k_0<\dots<k_{m-1}$, 
$\ell(X)=k_{m-1}+1$,  and 
for each $i<p$ and $q<m$, $x_i(k_q)=1$.
We say that such an  $m$-clique  {\em witnesses} that $X$ is in $K(m)$.

A level set $X=\{x_i\mid i<p\}$ is in $\AC(m,p)$ iff $X$ is in $K(m)$ and
for all $\ell'<\ell(X)$, $X\upharpoonright\ell'\not\in K(m)$.
We say that $X$ is an {\em age-change} iff $X\in \AC(m,p)$ for some $1\le m< n$  and  $1\le p  \le n+1-m$.
\end{definition}

There are no age-changes for $p>n+1-m$ (see \cite{Balko7}): such $p$-tuples are not a part of diaries.
Notice that if an $m$-clique  $\{\hat{v}_{k_q}\mid q<m\}$  witnesses that $X$ is in $K(m)$, then 
 any vertex  $\hat{v}_j$ 
 extending any node  in $X$ is connected to every member of the $m$-clique $\{\hat{v}_{k_q}\mid q<m\}$.
As $m$ increases, the possible connections between vertices in $\mathbb{H}_{n+1}$ extending the members of $X$ decreases. 
In particular, if $X$ is in $K(n-1)$, then 
any  collection of vertices in $\mathbb{H}_{n+1}$  extending members of $X$ must form an anticlique, since each such vertex forms an $n$-clique with $\{\hat{v}_{k_q}\mid q<n-1\}$.

\begin{definition}[Consecutive age-change]\label{defn.conagechange}
For the following, 
$1\le m<n$ and 
 $1\le p  \le n+1-m$.
For $p=1$, for each $1\le m< n$ 
we define $\Con(m,1)=\AC(m,1)$;  
every  age-change  in $\AC(m,1)$ is {\em consecutive}.
For  $p\ge 2$,
we define $X\in \AC(m,p)$ to be a 
{\em consecutive age-change}, and write $X\in\Con(m,p)$,
 iff   
the following hold:
\begin{enumerate}
\item
For  each proper subset $Y\subset X$,
there is some (unique) $\ell<\ell(X)$ so that $Y\upharpoonright \ell$ is  
in $\Con(m,p')$, where $p'$ is the size  of $Y\upharpoonright \ell$.
\item
If $m\ge 2$, 
then for each $1\le m'<m$ and
 each $Y\sse X$ 
there is an $\ell<\ell(X)$ such that
 $Y\upharpoonright \ell$ is in $\Con(m',p')$, where $p'$ is the size of  $Y\upharpoonright \ell$.
\end{enumerate}
\end{definition}

The last ingredient of diaries for Henson graphs is called ``controlled coding triples"  \cite{Balko7}.
 These add as many age-changes as possible, subject to the constraint of 
 not affecting the graph structure (of the  subgraph of $\mathbb{H}_{n+1}$  encoded by the diary). 
As the precise definition  of controlled coding triple is somewhat involved for $n\ge 3$ and not used in our proofs, 
 the interested reader is referred to \cite{Balko7} or \cite{CDM}.
For our present work, it suffices to know that that for each finite subgraph $G$ of $\mathbb{H}_{n+1}$, there are finitely many diaries associated with copies of $G$ in $\mathbb{H}_{n+1}$, and  each diary is characterized by the tree structure along with the placement of splitting nodes, age-changes, and  vertices.

We say that a node in $T$ is in the {\em spine} of $T$ if it is a finite sequence of $0$'s.
The subtree of $T$  induced by any subcopy of 
$\mathbb{H}_{n+1}$  retains the spine.
It follows from our standard enumeration of $\mathbb{H}_{n+1}$ that  $\hat{v}_j$ is in $\Con(n-1,1)$, for each $j\ge n-1$.
The effects of this are that (for all $j\ge n-1$) 
the  coding node $\hat{v}_j$ in $T$ is off the spine and   has exactly one immediate successor, namely 
${\hat{v}_j}^{\frown}0$. 
Without loss of generality, we may work in the copy $\mathbb{H}_{n+1}\re\{\hat{v}_j:j\ge n-1\}$ and assume these properties for all  coding nodes. 
We now discuss diaries for Henson graphs.

\begin{definition}\label{def.diary}
Let $G=\{\hat{v}_{j_i}\mid i<g\}$, ($1\le g\le \om$),  be  a subgraph of $\mathbb{H}_{n+1}$.  
We say that  $G$  {\em induces  a   diary}  iff 
letting $A=A(G)$ denote the 
meet-closure  in $T$ of the vertices $\{\hat{v}_{j_i}\mid i<g\}$,
the following hold:
\begin{enumerate}
\item
Each pair of  vertices  in $G$ splits.
Hence,  the vertices in $G$  are exactly the terminal nodes in $A$.
\item 
Each age-change in $A$ is consecutive.
\item
Each level of $A$ has at most one of three types of 
events:
\begin{enumerate}
\item[(a)]
Exactly one splitting node.
This can either be the meet of a vertex node with the spine, or a splitting node that is the meet of two vertex nodes in $A$.
Moreover, except for the one node that extends the splitting node by $1$, all other nodes at this level extend by $0$ (ensuring there are no age-changes, except  when the splitting node  is splitting off from the spine). 
\item[(b)]
Exactly one  consecutive age-change. In this case, the set of nodes involved in the consecutive age-change and the  type of age-change, $\Con(m,p)$,   are recorded.
\item[(c)]
A vertex, say $\hat{v}_{j_i}$. In this case, for each $i<k<g$, the value of $\hat{v}_{j_k}(|\hat{v}_{j_i}|)$ is recorded.
\end{enumerate}
The {\em interesting levels} of $A$
are those levels 
in which an event of {\em type} (a), (b), or (c) takes place.
\item
In (3c),  for $i<g$, 
the collection of nodes  $\{\hat{v}_{j_k}:i<k<g\}$ 
forms a controlled coding triple.
This shows up as consecutive age-changes as part of (3b).
\end{enumerate}

 Let  $L(A)$ denote the set of those $\ell$ for which $A\upharpoonright\ell$ is an  interesting level of $A$, and let $L=|L(A)|$.
If  $G$  induces a diary, then  the {\em diary}
 of $G$,  $\Delta=\Delta(G)$,  is defined to be 
the subtree of $2^{< L}$ 
that is isomorphic (in both  the tree and lexicographic orders) to
$\bigcup_{\ell\in L(A)} A\upharpoonright\ell$,
 along with  the following information:
 the type of each level of interest,
 the node(s) in the level  involved in the type,
 and in the case of a vertex node,
 the  value ($0$ or $1$) of the immediate successors of all non-vertex nodes in that level
 (equivalently, the edge/non-edge relations between the vertices in $G$).
Two graphs $F$ and $G$ are said to {\em have the same diary} 
iff $\Delta(F)$ and $\Delta(G)$  are equal.
\end{definition}

The number of diaries for a given finite graph $G$ is finite.
Further,  if $G\sse T$ induces a diary, then every subgraph of $G$ also induces a diary. (See \cite{Balko7} or \cite{CDM}.)
\begin{definition}\label{defn.deltaG}
   Given $n\ge 2$ and  a finite subgraph $G$ of $\mathbb{H}_{n+1}$,
we define $\delta(G,n)$ to be the maximum number of levels in the diaries for $G$ in $\mathbb{H}_{n+1}$.
\end{definition}
A very rough upper bound for 
 $\delta(G,n)$ is 
\begin{equation}
2|G|(n-1)\sum_{1\le k\le n}{|G|\choose k}
\end{equation}
Here, $2|G|$ is the number of levels in a Joyce tree with $|G|$ many terminal nodes.
Between each two levels of such a tree, there can be at most 
$\sum_{1\le k\le n}{|G|\choose k}$ many ways to choose sets of nodes for an age-change, and at most $n-1$ many different age changes for each of these sets of nodes.

An {\em embedding} of  a diary $\Delta$ into $T$ is a tree, lexicographic order, coding node, and relative-length preserving injection $e:\Delta\ra T$ such that the 
$e$-image of the terminal nodes  in $\Delta$ 
is a set of vertex nodes for a graph 
$G$ that induces $\Delta$.
(It follows that $e$ also preserves age-changes.)
A  {\em copy} of $\Delta$ in $T$  is the image of some embedding of $\Delta$ into $T$.

\section{Big Ramsey degrees in Henson graphs in ACA$_0$}

In this section, 
we find bounds on the number of jumps needed to prove that a given finite diary has the Ramsey property. 
To do so, we recast
arguments in \cite{MR4128725} and \cite{MR4568266}  that used set-theoretic forcing on coding trees into the arithmetic setting.

The tree $T$ in the previous section will be our template tree.
We say that a subtree $S\sse T$ which encodes a copy of $\mathbb{H}_{n+1}$ is {\em aged-isomorphic} 
to $T$ iff 
there is a  tree-order and lexicographic-order preserving  bijection $f:T\ra S$ satisfying the following:
\begin{enumerate}
\item
$f$ takes the $i$-th coding node  $\hat{v}_i$ in $T$ to the $i$-th coding node (in order of length)  in $S$.
The $i$-th level of $S$, denoted $\Lev_S(i)$, is defined to be the set of those $s\in S$ with $|s|=|f(\hat{v}_i)|$.
\item
$f$ takes the $i$-th level of $T$ to the $i$-th level of $S$, preserving the lexicographic order.
\item
For each node $t\in T$, $t$ splits in $T$ iff $f(t)$ splits in $S$.
\item
Between levels of $S$  there are no splitting nodes, no coding nodes,  and no age-changes (except for the age-change at the immediate successors (in $T$) of the levels in $S$). 
\end{enumerate}

By an {\em initial segment} of a diary $\Delta$ we mean $\Delta\re\ell:=\{s\in\Delta:|s|<\ell\}$ for some   $\ell\le |\Delta|$, where $|\Delta|$ denotes the  number of levels in $\Delta$.
In particular, if $\ell= |\Delta|$ then $\Delta\re\ell=\Delta$, and if 
$\ell=0$ then 
$\Delta\re\ell$ is the empty set.
The {\em immediate predecessor} of $\Delta\re\ell$ is $\Delta\re(\ell-1)$.
A {\em copy} of $\Delta\re\ell$ is the $e$-image of $\Delta\re \ell$ for some embedding $e$ of $\Delta$ into $T$.

Suppose we have some  non-empty initial segment of a finite diary, which (abusing notation) we shall also denote by $\Delta$,
and a coloring of all copies of $\Delta$ in $T$. Let $\Delta^-$ denote the immediate predecessor of $\Delta$. 
We  include the possibility that 
 $\Delta$ is  the minimal node of a diary. 
In that case,
 $\Delta^-$ is the empty set and 
$\Delta$ is a splitting node on the spine.
An intermediate step  towards finding a subtree in which all copies of a finite diary have the same color is to find an aged-isomorphic subtree $S$ of $T$ so that  the color of the copies of $\Delta$ in $S$ is entirely determined by which copy of $\Delta^-$ they extend.
A predecessor step to that is 
the content of the next theorem, whose proof is the main technical step.

\begin{theorem}[$\mathsf{ACA}_0$]\label{thm.general}
Let $\tilde{\Delta}$ be a finite diary, $\Delta$ an initial segment of $\tilde{\Delta}$, and $\Delta^-$ the immediate  predecessor of $\Delta^-$ in $\tilde{\Delta}$.
Let  $A$ be a proper initial segment of $T$, 
and suppose $D$ is a copy of $\Delta^-$ in $A$ whose maximum level is contained in the maximum level of $A$. 
Fix  a coloring $\chi$ of all copies of $\Delta$ in $T$ extending $D$.
Then there is a computable (in $T$) aged-isomorphic subtree   $S$ of $T$
extending $A$ such that 
every extension of $D$ to 
a copy of $\Delta$ in $S$ has the same $\chi$-color.
Further, an index of this subcopy can be computed by $0''$  from $\Delta, A, D$ and a code for $T$.
In the case that
$\Delta$ is a single splitting node on the spine, an index can be computed by $0'$ from $\Delta, A, D$ and a code for $T$.
\end{theorem}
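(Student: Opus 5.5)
We will prove the theorem by an arithmetical forcing argument modeled on the third author's proof of the Halpern--\Lauchli\ Theorem, with the Erd\H{o}s--Rado step of the coding-tree arguments in \cite{MR4128725, MR4568266} replaced by a density/genericity argument relative to $0'$. We first split according to the type of the top level of $\Delta$ (Definition~\ref{def.diary}(3)). The top level can be (a) a single splitting node, (b) a consecutive age-change $\Con(m,p)$, or (c) a vertex node. In every case, a copy of $\Delta$ extending $D$ is determined by a finite choice of nodes $\vec{x}=(x_0,\dots,x_{p-1})$ in some level of $T$. These nodes extend the $p$ maximal nodes of $D$ that are not yet ``frozen'', and they realize the required event at their level: a split, a consecutive age-change witnessed by an $m$-clique, or a coding node together with the prescribed passing numbers. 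Thus $\chi$ induces a coloring of such level sets $\vec{x}$ above the top level of $A$. The task is to thin $T$ above $A$ to an aged-isomorphic $S$ on which this coloring is constant.

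The core is a Halpern--\Lauchli-style lemma for these level sets. We consider the forcing whose conditions are finite aged-isomorphic initial segments of candidate subtrees extending $A$, i.e.\ finite approximations to $S$. We ask the following question about a condition $B$ and a color $\epsilon$: is it the case that for every extension $B'\le B$ there is a further extension adding a level of $B'$'s successors realizing the event with color $\epsilon$? This question is $\Pi^0_2$ in $T\oplus\chi$, hence decidable by $0''$. Using property $(*)$ and the fact that between levels one may freely add ``pass-through'' extensions without creating splits or age-changes, we show a pigeonhole fact. For the finitely many colors, some condition $B\le A$ and some color $\epsilon$ have the density property above $B$; otherwise we could build a decreasing sequence killing each color in turn and reach a contradiction, since the final level must receive some color. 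With $B$ and $\epsilon$ fixed, $0''$ then builds $S$ level by level. At stage $i$ it extends the current finite tree by searching (computably, once density is known) for the next level of $T$ that is aged-isomorphic to $\Lev_T(i)$. All new $p$-tuples of nodes extending $D$ at that level must realize the event with color $\epsilon$. This uses density repeatedly for each of the finitely many tuples that branch through the new level, handling them one at a time, as in the standard Halpern--\Lauchli\ fusion. Each single search is computable in $T$ given that it terminates. So $S$ is computable in $T$, with the index found by $0''$ (the choices of $B$ and $\epsilon$).

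The main obstacle is the vertex case (c) and the age-change case (b). In these cases the extensions must simultaneously preserve aged-isomorphism, so no unwanted age-changes can be introduced between levels. This is exactly where the clique-freeness constraints of Definition~\ref{defn.Kmp} bite. We will handle it by always extending non-involved nodes by $0$ (leftmost extensions) and invoking $(*)$ to guarantee that the required coding node or witnessing $m$-clique exists above any level set not already in $K(n-1)$. Consecutiveness (Definition~\ref{defn.conagechange}) ensures that the needed proper-subset age-changes have already occurred in $D$.

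Finally, when $\Delta$ is a single splitting node on the spine, there is no tuple to coordinate. The coloring depends only on a spine node and the split-off node, so the pigeonhole reduces to a single-sequence version. The question ``is color $\epsilon$ realized above every length?'' is then $\Pi^0_1$ relative to the search, and one jump suffices. This gives the $0'$ bound.
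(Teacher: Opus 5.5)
Your overall architecture matches the paper's: use $0''$ to locate a $\Pi^0_2$ ``density'' witness, then build $S$ computably in $T$ by searches that are guaranteed to terminate. Your single-tuple density/pigeonhole argument is essentially the paper's Lemma~\ref{lem.endhomonecn}. Outside that special case, however, the argument has a genuine gap.

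A level of an aged-isomorphic $S$ contains many nodes in each cone $T_j$ above $x_j$. Subject to the no-age-change constraints, every choice of one node per cone at that level is a copy of $\Delta$ extending $D$, and all of these copies sit at the same length. Your step ``handling them one at a time'' therefore fails. After you extend so that one tuple gets color $\epsilon$ at length $\ell_1$, the other tuples at length $\ell_1$ have uncontrolled colors. Extending the level further to fix a second tuple moves the first tuple to a new length, where it is a different copy of $\Delta$ whose color is again uncontrolled. This is exactly the product (Halpern--\Lauchli) phenomenon that forces the use of Erd\H{o}s--Rado in the set-theoretic proofs.

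Nor can the pigeonhole be upgraded. If your density property means ``some tuple at the new level gets color $\epsilon$'', it does not give homogeneity of a level. If it means ``all tuples at the new level get color $\epsilon$'', then killing the colors one by one yields no contradiction, since a level may simply carry tuples of both colors. Also, the vertex and age-change cases are not the real obstacle. They are handled routinely by requiring no age-changes over $\ran(p)$ in the ordering and by using leftmost extensions.

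What is missing is a density notion that quantifies over arbitrarily large finite grids of extensions, together with an argument that some base condition and color satisfy it. The paper supplies this with configurations. An index $(j,\infty)$ stands for unboundedly many copies of a node in $T_j$. A configuration is consistent if, for every instantiation $(K,q,\rho)$ (a finite grid of common extensions), there are infinitely many levels with an $r\le q$ giving every tuple in $K$ its prescribed color.

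A consistent configuration with $\vec\infty$ in its index set is obtained by induction on the number of $\infty$ coordinates:
\begin{itemize}
\item Lemma~\ref{lem8} treats finite indices: try color $0$ with the trivial condition, and if that is inconsistent, a witness $q^*$ to the inconsistency makes color $1$ consistent.
\item Lemma~\ref{lem9} replaces Erd\H{o}s--Rado. It adds, in an $\omega$-sequence, the indices $(2i+2,\vec i^-)$ with conditions chosen least in a fixed enumeration. These least choices are bounded by the choice made for $(0,\vec i^-)$ and so stabilize, which supplies the value at the new $\infty$ coordinate.
\end{itemize}
Since consistency is $\Pi^0_2$, $0''$ finds such a configuration. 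Lemma~\ref{lem.Dendhomog} then builds $S$ computably in $T$ by instantiating $p^{\mathcal{C}}_{\vec\infty}$ on the whole product grid of each new level at once.

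A minor point on the spine case: ``color $\epsilon$ is realized above every length'' is $\Pi^0_2$, not $\Pi^0_1$. The $0'$ bound instead comes from repeatedly asking the $\Sigma^0_1$ question of whether the current color recurs above the current length, switching colors at most once.
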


We first prove 
 Theorem \ref{thm.general} for the special case when  the maximum level of $\Delta$ has only one coding node, as its
 proof is much simpler.
After that,  we will give the proof of  Theorem \ref{thm.general} for all other cases.

\begin{lemma}[$\mathsf{ACA}_0$]\label{lem.endhomonecn}
Theorem \ref{thm.general} holds when the maximum level of $\Delta$ has only one coding node (and therefore $\Delta$ is the diary $\tilde{\Delta}$).
\end{lemma}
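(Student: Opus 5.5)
Since the maximum level of $\Delta$ contains exactly one coding node and $\Delta^-$ is its immediate predecessor, a copy of $\Delta$ extending $D$ is determined by a single choice: a coding node $c$ in the tree extending the appropriate node $d\in D$ (the node of the maximum level of $D$ that $\Delta$ says becomes the vertex). The other nodes of that level must be extended by the unique ``passive'' extensions of the remaining nodes of $D$ to the length $|c|$. Those extensions take value $0$ or $1$ at the recorded places and carry no new splits or age-changes, exactly as recorded in $\Delta$. So the coloring $\chi$ becomes a coloring of a set of coding nodes $c\supseteq d$, together with the determined level sets $\{c\}\cup\{$extensions of $D\setminus\{d\}\}$. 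My plan is to build $S$ level by level, always keeping a pigeonhole-large reservoir, and this needs only one jump.

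The construction proceeds in three steps. First, I fix $A$. Above its maximum level, $\Lev_A$, I build the next level of an aged-isomorphic tree greedily and computably in $T$, using property $(*)$ to find extensions realizing any prescribed level configuration. At each stage $i$ I must choose the $i$-th coding node of $S$. Whenever that choice is made above $d$ (i.e.\ extending the current node of $S$ lying over $d$), the resulting copy of $\Delta$ gets a color. The point is that the colors of copies of $\Delta$ in $S$ extending $D$ are exactly the colors of the coding nodes of $S$ above $d$ (with the passive extensions), and these coding nodes are chosen one at a time.

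Second, I use $0'$ to pick a color that can be used at every stage. For each color $\kappa$, consider the statement: ``for every finite level-extension $B$ of $A$ that is aged-isomorphic to an initial segment of $T$, and every node $b$ in the top level of $B$ above $d$, there is a coding node $c\supseteq b$ with passive extensions of the other top nodes of $B$ making a copy of $\Delta$ of color $\kappa$.'' If this fails for all $\kappa$, then for each $\kappa$ some witness $B_\kappa$ avoids $\kappa$ everywhere above it. By iterating these witnesses (each $B_{\kappa+1}$ built on top of $B_\kappa$, which is possible because the failure is uniform in extensions), one gets a $B$ above which no color occurs. This is a contradiction, since $(*)$ and the standard enumeration guarantee that copies of $\Delta$ exist above any node. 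Hence some $\kappa$ is ``dense''. Its density is a $\Pi^0_2$ property, but to pick the correct $\kappa$ it suffices to run the iteration argument with $0'$ as an oracle, deciding at each step whether a given finite $B$ has a $\kappa$-colored extension ($\Sigma^0_1$). This step, choosing the color uniformly with only one jump, is where the real care is needed.

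Third, given a dense $\kappa$ (relative to some base $B_0$ found by $0'$), I build $S\supseteq B_0$ computably. At each stage where the new coding node must lie above $d$, I search for, and by density find, a coding node of color $\kappa$, then complete the level via $(*)$. At the remaining stages I choose arbitrary coding nodes not above $d$, or above $d$ but with color $\kappa$. All choices preserve the aged-isomorphism conditions (1)–(4) because passive extensions add no splits or age-changes. The index of $S$ is computed from $0'$ (just to fix $\kappa$ and $B_0$), which lies well within the $0''$ bound claimed in Theorem~\ref{thm.general}. The main obstacle I expect is verifying that the density argument correctly accounts for the constraint that $S$ must extend $A$ rather than an arbitrary $B_0$. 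I would handle this by proving the density dichotomy relative to extensions of $A$ and taking $B_0=A$ after having confirmed that some $\kappa$ is dense above $A$ itself: if none is, iterating the counterexamples contradicts the existence of copies.
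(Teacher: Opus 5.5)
Your overall plan is the paper's: split on whether a color is dense above the node leading to the last coding node, and if so build $S$ by searching for coding nodes of that color. However, the dichotomy step is wrong as written, and so is the complexity claim.

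\textbf{The dichotomy.} It is not true that some color must be dense above $A$ itself. Suppose the copies $D\cup\{c\}$ with $c$ passing through one extension $s$ of $x^+$ get color $0$, and those through an incomparable extension $s'$ get color $1$. Then neither color is dense. Your iteration ``$B_{\kappa+1}$ on top of $B_\kappa$'' also breaks here: failure of density for the next color is witnessed by some extension of $A$, not necessarily one above the previous witness (above $s'$ there is no witness against color $1$). The correct iteration relativizes density to cones. If color $0$ is not dense above $x^+$, take $s\supseteq x^+$ (with no new age-changes) above which no copy has color $0$, and pass to the next color above $s$. So you cannot take $B_0=A$. As in the paper's $\neg\Phi(x^+,D)$ case, you must extend $x^+$ to $s$ and the other top nodes leftmost to a level set $Z$ that is \emph{not} a level of $S$, and build $S$ above $Z$. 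Using instead an aged-isomorphic multi-level $B_0\subseteq S$ is unsafe, since its coding nodes may already form copies of $\Delta$ over $D$ of the wrong color. Also, density must quantify only over nodes extending $x^+$ without new age-changes. Above other nodes over $d$ there are no copies at all, so your final ``contradiction'' is not one.

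\textbf{The jump count.} The one-jump claim is false. Density of a color above a node is $\Pi^0_2$. Your $0'$-questions (does a given finite $B$ have a $\kappa$-colored extension?) only let you confirm a non-density witness once you have found it. When the color is dense the search never halts, so $0'$ never tells you which case you are in. This cannot be repaired. Suppose a $0'$-procedure produced a homogeneous $S$ uniformly from $\chi$. Then $0'$ could find the first copy of $\Delta$ over $D$ in $S$ and read off its color. A recursion-theorem coloring that gives each candidate coding node above $x^+$ the opposite of the current limit approximation to that color makes the final color occur only finitely often in $T$. That contradicts homogeneity, since $S$ contains infinitely many such copies. This is why the paper decides the $\Pi^0_2$ sentence $\Phi(x^+,D)$ with $0''$ before building $S$. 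With that change and the fix above, your construction does give the $0''$ bound the statement requires, but not the $0'$ bound you claim.
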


\begin{proof}
In this case, 
the maximum level of $D$   has at most two nodes, only one of which extends to a coding node to make a copy of $\Delta$, denoted by $x$.
Let $x^+$ denote the immediate successor of $x$ such that the union of $D$ with  any coding node extending $x^+$ 
makes a copy of $\Delta$.
Let  $\Phi(x^+,D)$ be the sentence:
$$
\forall s\contains x^+\ \exists c\contains s \ \chi(D\cup\{c\})= 0.
$$
Thus $\Phi(x^+,D)$ says that densely above $x^+$, there are coding nodes which, when unioned with $D$, all have $\chi$-color $0$.
The negation $\neg\Phi(x^+,D)$ says
$$
\exists s\contains x^+\ \forall c\contains s \ \chi(D\cup\{c\})= 1.
$$
That is, there is an $s$ extending $x^+$ so that every coding node in the cone above $s$, when unioned with $D$, has $\chi$-color $1$.
We build an aged-isomorphic subtree $S$ of $T$ (end-extending $A$) where all copies of $\Delta$ extending $D$ have the same color.

Suppose  $\Phi(x^+,D)$ holds and we have built $S$ extending $A$ up to level $m-1$, call this $A_{m-1}$, where $m$ is such that 
$\Lev_T(m)$ has a coding node extending $x^+$.
(We assume that $A_{m-1}$ is aged-isomorphic to $T$ truncated to its level $m-1$.)
Let $s$ denote the 
node in 
$\Lev_S^+(m-1)$ that must be extended to a coding node in any aged-isomorphic copy of $T$ up to $\Lev_T(m)$.
Note that $s$ extends $x^+$, so 
by $\Phi(x^+,D)$, there is some coding node $c$ extending $s$ so that $\chi(D\cup\{c\})=0$.
Extend all other nodes in  $\Lev_S^+(m-1)$ leftmost to the length of $c$.  The union of these nodes with $c$ is $\Lev_S(m)$.

Suppose  $\neg\Phi(x^+,D)$ holds. 
First extend $x^+$ to a node $s$ so that all coding nodes in the cone above $s$ give copies of $\Delta$ with $\chi$-color $1$.
Extend all other nodes in $\Lev_S^+(m-1)$ leftmost to the same length as $s$.
Call this level set $Z$; it will not be a part of $S$ but every node in $S$ above $A$ will extend some node in $Z$.
From here on, build $S$ level by level so that it is an aged-isomorphic copy of $T$.  Every coding node in $S$ that is part of a copy of $\Delta$ extending $D$ will automatically extend $s$ and hence give $\chi$-color $1$ when unioned with $D$.
\end{proof}


\subsection{Proof of Theorem \ref{thm.general}}

This subsection is devoted to proving the remaining cases of Theorem \ref{thm.general}:
 either the maximum level of $\Delta$ has at least two nodes or it has one non-coding node which has a consecutive age-change.

Let $X$ denote the nodes of maximum length in $D$.
There are three possibilities:
$X$ could have a splitting node, a coding node, or a consecutive age-change. 
In each of these cases we 
consider  as well   the immediate successors in $T$ of any (equivalently, every) extension of $X$ to a copy of $\Delta$. 
If $X$ has a splitting node, then  let $X^+$ consist of  both  immediate successors  of the splitting node  in $X$ and 
extend all other nodes in  $X$ with exactly one $0$.
If $X$ has a coding node, 
then  $X^+$ consists of  $x^+$ for each non-coding node $x\in X$, where 
$x^+=x^{\frown}0$
  iff every extension of $D$ to a copy of $\Delta$ immediately extends $x$ by $0$; 
otherwise  $x^+=x^{\frown}1$.
If $X$ has a consecutive age-change,  then let $X^+$ be $\{x^+\mid x\in X\}$, where 
for each $x\in X$, $x^+=x^{\frown}0$ iff every extension of $D$ to a copy of $\Delta$ immediately extends $x$ by $0$;
otherwise $x^+=x^{\frown}1$.
In particular, the nodes in $X^+$ all have length $\ell(X)+1$.
Note that  
every extension of $D$ to a copy of $\Delta$ in $T$ has exactly
$X^+$ as its truncation to length $\ell(X)+1$.

Enumerate $X^+$ as $\{x_0,\dots,x_d\}$, where the order will be determined  in the following paragraphs.
Let $\mathbf{Y}$ be any level set in $T$ end-extending $X^+$  to a 
copy of $\Delta$ in $T$.
There are no age-changes between $X^+$ and  $\mathbf{Y}$.
Recall that in this lemma, we are assuming that the maximum level of $\Delta$ has at least two nodes,  so $\Delta\ne \tilde{\Delta}$.
Let 
 $\mathbf{Y}^+$ denote the immediate successors of $\mathbf{Y}$ in some/any extension of $D\cup \mathbf{Y}$ to a copy of $\tilde{\Delta}$.  We now have three cases:

\underline{Cases  1 and 2}.
If the maximum level of $\Delta$ has either (1) a splitting node or  (2) a coding node, let $x_d$ denote  the unique node in $X^+$ that must extend to a splitting or coding node in order to make a copy of $\Delta$.
Let $\mathbf{y}_d$ denote the splitting or coding node in $\mathbf{Y}$ (so $\mathbf{y}_d$ extends $x_d$).
In Case 1, let $\mathbf{Y}^+$
consist of the immediate successors of $\mathbf{y}_d$,
 $\mathbf{y}_d^+={\mathbf{y}_d}^{\frown}0$ and  $\mathbf{y}_{d+1}^+={\mathbf{y}_d}^{\frown}1$, as well as  $\mathbf{y}_i^+={\mathbf{y}_i}^{\frown}0$ for all $i<d$.
In Case 2, let $e\le d$ be the number of nodes in $\mathbf{Y}^+$ that have last entry $1$; let $\mathbf{y}_i$, $i<e$, enumerate the nodes in $\mathbf{Y}^+$ with last entry $1$,
and let $\mathbf{y}_e,\dots, \mathbf{y}_{d-1}$ enumerate the nodes in $\mathbf{Y}^+$ with last entry $0$.
In both cases, for each $i<d$, let $\mathbf{y}_i$ denote the immediate predecessor of $\mathbf{y}_i^+$ so that $\mathbf{Y}=\{\mathbf{y}_i\mid i\le d\}$.
For all $i\le d$, let $x_i$ denote the predecessor of $\mathbf{y}_i$ in $X^+$.

\underline{Case 3}.  The maximum level of $\Delta$ has a consecutive age-change over $\Delta^-$. 
Then 
let $e\le d$ and the indexing of $X^+$ be such that 
any copy of $\Delta$ extending $X^+$
must have
the nodes $x_0,\dots,x_{e-1}$
 extend by $1$
and the nodes $x_e, \dots, x_d$  extend by $0$.
Then for each $i\le d$, let $\mathbf{y}_i$ denote the node in $\mathbf{Y}$ extending $x_i$, and let $\mathbf{y}_i^+$ denote the node in $Y^+$ extending $\mathbf{y}_i$ (and hence, also extending $x_i$).
Note that  the last entry of $\mathbf{y}_i^+$ is a $1$ iff $i\le e$.

This completes the set-up enabling us to  define the forcings.
Let $A$ denote the tree $T$ truncated to the level containing $X^+$.
Let $\mathcal{Y}$ denote the collection of all level sets $Y$ end-extending $X^+$ that 
extend $D$ to a 
 copy of $\Delta$.
In each of the three cases, if $Y$ is in $\mathcal{Y}$, then the collection of  the immediate successors of $Y$ in $T$ that extends to a copy of $\tilde{\Delta}$ is uniquely determined by $Y$; call this set   $Y^+$.
 Note that for each $Y\in\mathcal{Y}$,
  $X\cup Y^+$  aged-isomorphic to
$X\cup \mathbf{Y}^+$.

Let $\chi:\mathcal{Y}\ra \{0,1\}$ be a coloring.
Note that  $\chi$ uniquely determines a coloring on the set  $\{Y^+\mid Y\in\mathcal{Y}\}$.  
For each $j\le d$, let $T_j$ denote the cone in $T$ above $x_j$.
Each of the three cases will involve a different  forcing, whose definitions start the same.

An {\em ordinary condition}  is a function  $p$ with the following properties.
The domain of $p$ is a finite subset of $(d+1)\times\om$
and range some level set in $T$, where for each $(i,n)$ in the domain of $p$, $p(i,n)\in T_i$.
In particular, this implies that $p(i,n)$
extends $x_i$.
In Case 1,
we require that 
$p(d,n)$ is a splitting node in $T_d$, for each $n$ for which $(d,n)$ is in the domain of $p$.
In Case 2, we require that $p(d,n)$ is a coding node  in $T_d$, for each $n$ for which $(d,n)$ is in the domain of $p$.
There is only one coding node in each level of $T$, so all  $p(d,n)$ will be the same coding node
in this case. For each $i<e$, we further require that $p(i,n)$ is a splitting node in $T_i$ (so that it will have an immediate extension by $1$).
For $e\le i<d$, $p(i,n)$ may be either splitting or non-splitting--it will not matter.
In Case 3,  for each $i<e$, we require that $p(i,n)$ is a splitting node in $T_i$.
For $e\le i\le d$, $p(i,n)$ may be either splitting or non-splitting--it will not matter.

This completes the definition of the conditions for the three cases. 
For $k\in \{1,2,3\}$, let $P_k$ denote the set of all conditions for Case $k$.
The partial ordering  $\le$ is defined in the same way on each $P_k$:
For $p,q\in P_k$,  $q\le p$ holds iff 
\begin{enumerate}
\item $\dom(p)\subseteq\dom(q)$,
    \item
for each $(i,n)\in \dom(p)$, $q(i,n)\contains p(i,n)$, and 
\item
$\ran(q)\re \dom(p)$ has no age-changes over $\ran(p)$.
\end{enumerate}
The important point of the definition of 
$(P_k,\le)$  is that, for  $k\in\{1,2,3\}$
and  $p,q\in P_k$, 
 if $\{p(i,n_i)\mid i\le d\}$ is a member of $\mathcal{Y}$  and $q\le p$
 then $\{q(i,n_i)\mid i\le d\}$ is also a member of $\mathcal{Y}$.

 From now on, fix $k\in\{1,2,3\}$ and let $P$ denote $P_k$.
The goal now is to use $(P,\le)$ to construct an aged-isomorphic subtree $S$ of $T$, extending $A$, so that each member of $\mathcal{Y}$ in $S$ has the same color.
That is, to construct an $S$ that is homogeneous for all members of $\mathcal{Y}$ in $S$.
The plan of construction is as follows:
First, we use the forcing partial order to end-extend the nodes in $X^+$ to some set of nodes for which it is always possible to extend that set to members of $\mathcal{Y}$ with the same color. 
After that,
we alternate between utilizing the forcing  partial order at  levels  that are immediate predecessors of levels in $S$ which must
contain members of $\mathcal{Y}$, and building  all other levels of the subtree by hand.


\medskip

Given a condition $p\in P$, 
let $\ell_p$ denote the length of the nodes in the range of $p$.
The following convention will be handy.
When we have a sequence $\vec{i}=(i_0,\dots,i_d)\in\om^{d+1}$ and a condition $p$ so that for each $j\le d$, $(j,i_j)\in\dom(p)$, 
we will only write
$p(\vec{i})$ 
to denote the tuple $(p(0,i_0),\dots,
p(d,i_{d}))$ when that tuple is an element of $\mathcal{Y}$.
In particular, we write $p(\vec{i})$ only when  $\chi(p(\vec{i}))$ is defined.

\begin{definition}
 A {\em simple configuration} $\cC=(I^{\cC}, \{p^{\cC}_{\vec{i}},\epsilon^{\cC}_{\vec{i}}\}_{\vec{i}\in I^{\cC}})$ consists of $I^\cC\sse \omega^{d+1}$, and for each $\vec{i}\in I^\cC$:
 \begin{enumerate}
\item 
$p_{\vec i}$  is a condition such that for each $j\le d$, $(j,i_j)\in\dom(p_{\vec i})$, and 
the tuple $p_{\vec i}(\vec{i})$ is an element of $\mathcal{Y}$;
\item 
a color $\epsilon_{\vec i}\in 2$. 
 \end{enumerate}
When $\pi\in(\omega\rightarrow\omega)^{d+1}$ is a tuple of injective functions, we write $\cD\le_\pi \cC$ if
\begin{enumerate}
    \item 
$\pi[I^{\cC}]\sse I^{\cD}$;
    \item
    for each $\vec i\in I^{\cC}$, $p^{\cD}_{\pi(\vec i)} = p^{\cC}_{\vec i}$ and  $\epsilon^{\cD}_{\pi(\vec i)} = \epsilon^{\cC}_{\vec i}$, where $\pi(\vec i)=(\pi_0(i_0),\ldots,\pi_d(i_d))$.
\end{enumerate}
We write $\cD\le\cC$ if there is some $\pi$ so that $\cD\le_\pi\cC$.

A simple configuration $\cC$ is {\em consistent} if whenever $J\sse I^\cC$ is finite and $q$ is a condition such that  $q\le p_{\vec i}$ for each $\vec i\in J$,
then 
there are infinitely many $\ell\in\om$ for which there is an $r\le q$ with $\ell_r=\ell$ and for each $\vec i\in J$, $\chi(r(\vec i))=\epsilon^{\cC}_{\vec i}$.
\end{definition}

The empty configuration is trivially consistent.  We need to ensure that we can construct consistent configurations with arbitrary domains.

\begin{lemma}[$\mathsf{ACA}_0$]
    If $\cC$ is a consistent simple configuration, $\pi\in(\omega\rightarrow\omega)^{d+1}$ is a tuple of injective functions, and $\vec{i}^*\in\omega^{d+1}$ then there is a consistent simple configuration $\cD\le_\pi \cC$ with $\vec{i}^*\in I^{\cD}$.
\end{lemma}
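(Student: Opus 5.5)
We first transport $\cC$ along $\pi$ and then add the single new index $\vec i^*$, picking its condition and color by a two-step ``try $0$, then try $1$'' argument. Throughout, the consistency of a configuration is an arithmetical statement: it quantifies over finite sets $J$ and over conditions, both of which are coded by natural numbers. So every case distinction below is an instance of excluded middle for an arithmetical formula, which is available in $\mathsf{ACA}_0$, and no actual search has to be carried out.

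\emph{Step 1: transport.} Let $\cC'$ be the configuration with $I^{\cC'}=\pi[I^{\cC}]$, $p^{\cC'}_{\pi(\vec i)}=p^{\cC}_{\vec i}$ and $\epsilon^{\cC'}_{\pi(\vec i)}=\epsilon^{\cC}_{\vec i}$. Then $\cC'\le_\pi\cC$. Consistency of $\cC'$ is literally the consistency of $\cC$ read through the relabeling $\pi$: a finite $J\sse I^{\cC'}$ corresponds to $\pi^{-1}[J]$, and the same conditions $q$ and $r$ witness the required lengths. If $\vec i^*\in I^{\cC'}$ we take $\cD=\cC'$ and are done. Otherwise we must choose a condition $p^*$ and a color $\epsilon^*$, and set $\cD=\cC'\cup\{(\vec i^*,p^*,\epsilon^*)\}$.

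\emph{Step 2: the first attempt.} Let $p^{(0)}$ be any condition whose domain contains the pairs $(j,i^*_j)$ and whose values at these pairs form a member of $\mathcal{Y}$. For instance, take nodes of a suitable level extending $X^+$ with the required splitting, coding or age-change behavior; these exist because some copy of $\Delta$ extends $D$. Try $\epsilon^*=0$ with $p^*=p^{(0)}$. If this gives a consistent configuration, we are done. If not, there is a finite $J_0\sse I^{\cC'}\cup\{\vec i^*\}$ and a condition $q_0$ below $p_{\vec i}$ for every $\vec i\in J_0$ such that only finitely many lengths $\ell$ admit some $r\le q_0$ with $\ell_r=\ell$ realizing the prescribed colors on $J_0$. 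We must have $\vec i^*\in J_0$, since otherwise this failure would contradict the consistency of $\cC'$. In particular $q_0\le p^{(0)}$, so $q_0(\vec i^*)\in\mathcal{Y}$, by the remark after the definition of $\le$.

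\emph{Step 3: the second attempt.} Now try $p^*=q_0$ with $\epsilon^*=1$. If this is consistent, we are done. If not, we obtain a finite $J_1\ni\vec i^*$ and a condition $q_1$ below $q_0$ and below $p_{\vec i}$ for every $\vec i\in J_1\setminus\{\vec i^*\}$, again with only finitely many good lengths. We need to check that $\le$ is transitive, including clause (3): if $\ran(q_1)$ has no age-changes over $\ran(q_0)$ and $\ran(q_0)$ has none over $\ran(p)$, then $\ran(q_1)$ has none over $\ran(p)$. This should follow since the restricted level sets extend one another. Granting transitivity, $q_1\le p_{\vec i}$ for all $\vec i\in J:=(J_0\cup J_1)\setminus\{\vec i^*\}$. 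Apply the consistency of $\cC'$ to $J$ and $q_1$. This gives infinitely many lengths $\ell$ with some $r\le q_1$ of length $\ell$ that realizes the prescribed colors on $J$. Since $q_1\le q_0\le p^{(0)}$, each such $r$ has $r(\vec i^*)\in\mathcal{Y}$, so $\chi(r(\vec i^*))$ is defined. Hence one of the two colors occurs for infinitely many of these lengths. If it is $0$, these $r\le q_0$ realize the $J_0$-colors at infinitely many lengths, contradicting the choice of $q_0$. If it is $1$, they do the same for $J_1$ and $q_1$. So one of the two attempts must succeed.

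The main obstacle is that the two failure witnesses $q_0$ and $q_1$ could a priori be incompatible. The device of taking $p^*=q_0$ in the second attempt is exactly what forces $q_1\le q_0$, and this nesting is what makes the final counting argument work. A secondary point to check is the transitivity of $\le$ with the no-age-change clause, together with the bookkeeping that the domains of the conditions contain the relabeled indices.
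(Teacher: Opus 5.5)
Your proposal is correct and follows essentially the same route as the paper: transport $\cC$ along $\pi$, try $\vec i^*$ with a trivial condition and color $0$, and if that fails, take the failure witness ($q^*$ in the paper, your $q_0$) as the new condition with color $1$; consistency then follows from the consistency of the transported configuration, the finiteness property of that witness, and the transitivity of $\le$, which the paper also uses tacitly. Your contradiction argument via a second witness $q_1$ is the paper's direct check over an arbitrary $(K,q)$ in another form, and choosing an initial condition whose values lie in $\mathcal{Y}$ is, if anything, slightly more careful than the paper's use of the nodes $x_j$.
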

    
\begin{proof}
 Let $\cC_0$ be the largest condition $\le_\pi\cC$---that is, the condition determined by $I^{\cC_0}=\pi[I^{\cC}]$. If $\vec{i}^*\in I^{\cC_0}$ then we are done, so we may assume $\vec {i}^*\not\in I^{\cC_0}$. Let $\cC'$ be the simple configuration extending $\cC$ by setting $I^{\cC'}=I^{\cC_0}\cup\{\vec{i}^*\}$ and taking $p^{\cC'}_{\vec{i}^*}$ to be the condition with $\dom(p^{\cC'}_{\vec{i}^*})=\{(j,i^*_j)\mid j\le d\}$, and $p^{\cC'}_{\vec{i}^*}(j,i^*_j)=x_j$  for all $j\le d$, and $\epsilon^{\cC'}_{\vec{i}^*}=0$.

If $\cC'$ is consistent, we are done, so suppose otherwise. 
Then there is a finite set $J\sse I^{\cC'}$ and a condition $q^*$ witnessing that $\cC'$ is inconsistent. 
This literally means that 
 $q^*\le p_{\vec i}$ for each $\vec i\in J$ and
 for all but finitely many $\ell$, for all $r\le q^*$ with $\ell_r=\ell$, there is an $\vec{i}\in J$ such that $\chi(r(\vec{i}))\ne\epsilon^{\cC}_{\vec{i}}$.
Hence, 
there are only finitely many 
$\ell\in\om$ for which there is an 
$r\le q^*$ with $\ell_r=\ell$  such that 
 for each  $\vec i\in J$,
$\chi(r(\vec i))=\epsilon^{\cC'}_{\vec i}$.
Since $\cC$, and so also $\cC_0$, is consistent, we must have that $\vec i^*\in J$.

Let $\cD$ be the simple configuration extending $\cC_0$ with $\vec i^*\in I^\cD$,  $p^{\cD}_{\vec i^*}=q^*$, and $\epsilon^{\cD}_{\vec i^*}=1$.  We claim that $\cD$ is consistent. 
Consider any finite $K\sse I^\cD$
and any $q\le p_{\vec i}^{\mathcal{D}}$ for all $\vec i\in K$.
If $\vec i^*\not\in K$, then $K\sse I^{\cC_0}$, so the existence of an $r$ witnessing its consistency follows from the consistency of $\cC_0$.
Now suppose that  $\vec i^*\in K$.  Then $q\le q^*$, and 
it follows that $q\le p_{\vec i}$ for all $\vec i\in J$ as well.
By the consistency of $\cC$, there are infinitely many levels $\ell$ for which there is an $r\le q$ so that $\ell_r=\ell$ and for each $\vec i\in (J\cup K)\setminus \{\vec i^*\}$, $\chi(r(\vec i))= \epsilon^{\cC}_{\vec i}$.
Therefore, except for finitely many of these levels, we have $\chi(r(\vec i^*))=1=\epsilon^{\cD}_{\vec i}$, as needed.
\end{proof}

If a condition $p$ is defined on a large domain (relative to $\ell_p$) it could be that there are many different $n\in\om$ with  $p(j,n)$ being the same node in the tree $T_j$.
We need other conditions which allow us to describe conditions which have `unboundedly many' copies of the same value.

\begin{definition}\label{defn.7.5}
A {\em condition} in $P^{\infty}$ is a function $p$ such that 
\begin{enumerate}
\item 
$\dom(p)$ is a finite subset of $(d+1)\times (\om\cup\{\infty\})$;
\item 
Replacing each occurrence of the form $(j,\infty)$ in $\dom(p)$ by  any $(j,n)$, where  $n\in\om$ and $(j,n)\not\in\dom(p)$,
produces an ordinary condition in $P$.
\end{enumerate}
We partially order $P^{\infty}$ as follows:
For $p,q\in P^{\infty}$, define $q\le p$ iff 
\begin{enumerate}
\item $\dom(p)\subseteq\dom(q)$;
    \item
for each $s\in \dom(p)$, $q(s)\contains p(s)$;
\item
$\ran(q)\re \dom(p)$ has no age-changes over $\ran(p)$.
\end{enumerate}
\end{definition}


Roughly speaking, $p(j,\infty)=t$ in a configuration will mean that we can have arbitrarily large (finite) sets $N\sse \om$ so that $p(j,n)=t$ for all $n\in N$. To make this precise, we need the idea of an \emph{$\vec i$-instantiation} of a condition $p$, which is an ordinary condition  $p^{\infty\mapsto \vec i}$ 
obtained by replacing $(j,\infty)$ with $(j,i_j)$ in the domain and 
defining 
 $p^{\infty\mapsto \vec i}(j, i_j)=p(j,\infty)$.

\begin{definition}
Let  $p\in P^{\infty}$ be a  condition.
Suppose
$\vec i=(i_0,\dots,i_d)\in \om^{d+1}$.
The {\em $\vec i$-instantiation of $p$} is an ordinary condition given by:
\begin{enumerate}
    \item $\dom(p^{\infty\mapsto\vec i})=(\{(j,n)\in\dom(p)\mid n\in\omega\})\cup (\{(j,i_j)\mid (j,\infty)\in\dom(p)\})$;
\item for $(j,n)\in\dom(p)$ with $n\in\omega$, $p^{\infty\mapsto\vec i}(j,n)=p(j,n)$;
\item for $(j,\infty)\in\dom(p)$, $p^{\infty\mapsto\vec i}(j,i_j)=p(j,\infty)$.
\end{enumerate}
\end{definition}

The  next notion of  `configuration' extends the notion of `simple configuration' above.

\begin{definition}\label{def.configuration}
 A {\em  configuration} $\cC=(I^{\cC}, \{p^{\cC}_{\vec{i}},\epsilon^{\cC}_{\vec{i}}\}_{\vec{i}\in I^{\cC}})$ consists of an index set $I^\cC\sse (d+1)\times(\om\cup\{\infty\})$, and for each $\vec{i}\in I^\cC$:
 \begin{enumerate}
\item 
a condition $p_{\vec i}$ such that for each $j\le d$, $(j,i_j)\in\dom(p_{\vec i})$ and $(j,\infty)\in \dom(p_{\vec i})$ only when $i_j=\infty$;
\item $\{p_{\vec i}(j,i_j)\mid j\le d\}$ is in $\mathcal{Y}$;
\item 
a color $\epsilon_{\vec i}\in 2$. 
 \end{enumerate}
When $\pi\in(\omega\rightarrow\omega)^{d+1}$ is a tuple of injective functions, by abuse of notation we also write $\pi$ for the tuple of functions from $\omega\cup\{\infty\}$ to $\omega\cup\{\infty\}$ which extends $\pi$ by setting $\pi_i(\infty)=\infty$ for all $i$. We write $\cD\le_\pi \cC$ if 
\begin{enumerate}
    \item 
$\pi[I^{\cC}]\sse I^{\cD}$;
    \item
    for each $\vec i\in I^{\cC}$, $p^{\cD}_{\pi(\vec i)} = p^{\cC}_{\vec i}$, and  $\epsilon^{\cD}_{\pi(\vec i)} = \epsilon^{\cC}_{\vec i}$.
\end{enumerate}
We write $\cD\le\cC$ if there is some injective $\pi:\omega\rightarrow\omega$ so that $\cD\le_\pi\cC$.
\end{definition}

Next we need to define when a configuration is consistent.  The crucial complication is that it is not enough to just consider common extensions of finitely many conditions: when we have $\infty$ in coordinates, we need to be able to make many copies of them.

\begin{definition}
When $\cC$ is a configuration, an {\em instantiation} is a
triple $(K,q,\rho)$ where 
 $K\sse \om^{d+1}$ is finite,  $q$ is an ordinary condition, and $\rho:\dom(q)\ra (d+1)\times (\om\cup\{\infty\})$ is  a function  satisfying the following:
\begin{enumerate}
    \item 
For each $\vec i\in K$ and each $j\le d$, $(j,i_j)\in\dom(q)$;
\item 
For each $(j,n)\in\dom(\rho)$,
$\rho(j,n)\in\{(j,n),(j,\infty)\}$;
\item 
For each $\vec i\in K$, $\rho(\vec i):=(\rho(j,i_j):{j\le d}) \in I^\cC$;
\item
For each $\vec i\in K$, $q\le (p^\cC_{\rho(\vec i)})^{\infty\mapsto \vec i}$.
\end{enumerate}

A configuration is {\em consistent} if whenever $(K,q,\rho)$ is an instantiation of $\cC$, there are infinitely many $\ell<\om$ for which there is an $r\le q$ with $\ell_r=\ell$ and $f(r(\vec i))=\epsilon^\cC_{\rho(\vec i)}$ for each $\vec i\in K$.
\end{definition}
When $\cC$ is a simple configuration, this is the same definition as before: the set $K$ picks out some subset of $I^{\cC}$, and $q$ must extend $p_{\vec{i}}^{\cC}$ for each $\vec i\in K$; in this case $\rho$ is simply the identity. In all cases, $K$ and $q$ contain only coordinates in $\omega$.

A simple motivating example of the generalization is when $d=1$ and $I^{\cC}=\{(\infty,\infty)\}$. Then we may take $K$ to be any finite set of pairs---say, $\{0,1\}\times[0,7]$---and $\rho(0,n)=(0,\infty)$ and $\rho(1,n)=(1,\infty)$ for all $n\in[0,7]$.
Then an instantiation is an ordinary condition $q$ such that $\{0,1\}\times[0,7]\subseteq\dom(q)$ and for all $(i_0,i_1)\in \{0,1\}\times[0,7]$, we have $q_{(i_0,i_1)}\leq p^{\cC}_{(\infty,\infty)}$.  Note that the $q_{(i_0,i_1)}$ need not be the same: saying $\cC$ is consistent is saying that for \emph{any} $q$, which is a finite grid of extensions of $p^{\cC}_{(\infty,\infty)}$, we may find infinitely many levels with extensions in the grid in the correct color.
That is, the $\infty$ coordinate in $I^{\cC}$ can become a large finite set $\rho^{-1}(j,\infty)$, which corresponds to making many copies of the condition $p^{\cC}_{(j,\infty)}$.

In the following, let 
$\vec{\infty}$ denote $(\infty_0,\dots,\infty_d)$.
Recall that part  (2) in Definition \ref{defn.7.5} implies that  if $p\in P^{\infty}$ and $(j,\infty)\in\dom(p)$, then $p(j,\infty)\in T_j$.
We now show that consistent configurations  with $\vec{\infty}\in I^\cC$ suffice to prove Theorem \ref{thm.general}.

\begin{lemma}[$\mathsf{ACA}_0$]\label{lem.Dendhomog}
Suppose $\cC=(I^{\cC}, \{p^{\cC}_{\vec{i}},\epsilon^{\cC}_{\vec{i}}\}_{\vec{i}\in I^{\cC}})$ is a consistent configuration  
with $\vec{\infty}\in I^\cC$, and that 
$\{p^{\cC}_{\vec \infty}(j,\infty)\mid j\le d\}$ has no age-changes over $X^+$.
Then there is a computable (in $T$) aged-isomorphic subtree $S$ of $T$ extending $A$ in which the extensions of $D$ extend $p^\cC_{\vec{\infty}}$,
  and in which all extensions of $D$ to a copy of $\Delta$ have color $\epsilon^\cC_{\vec{\infty}}$.
\end{lemma}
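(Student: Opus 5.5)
We build $S$ level by level, mimicking the construction in the proof of Lemma~\ref{lem.endhomonecn}, but consulting the consistency of $\cC$ at the levels where members of $\mathcal{Y}$ appear instead of a single $\Pi_2$ sentence. Write $p^*=p^\cC_{\vec\infty}$ and $t_j=p^*(j,\infty)\in T_j$. First we end-extend $A$: above $x_j$ we take $t_j$, and every other node of the top level of $A$ we extend leftmost to length $\ell_{p^*}$. Since $\{t_j\mid j\le d\}$ has no age-changes over $X^+$, and the leftmost extensions add no age-changes either, the result is a legitimate level on which to continue an aged-isomorphic copy of $T$. This is the level set $Z$ that every later node of $S$ above $x_j$ must extend.

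Next we construct the levels of $S$ in order, as in the proof of Lemma~\ref{lem.endhomonecn}. Suppose $S$ has been built through level $m-1$. Let $N_j$ be the finite set of nodes in $\Lev_S^+(m-1)$ extending $t_j$. Enumerate these nodes injectively as $q(j,n)$ for $n<|N_j|$, choosing the indices so that the nodes which, in an aged-isomorphic copy, must become splitting nodes (for $j<e$) or the coding node (in Case 2) are among them. Put $\rho(j,n)=(j,\infty)$ for all $n$, and let $K$ be the finite set of tuples $\vec i$ for which $q(\vec i)$ can be end-extended to a member of $\mathcal{Y}$ at level $m$. Provided the nodes of $\Lev_S^+(m-1)$ were obtained from $t_j$ without age-changes, which we ensure inductively by keeping all "by hand" extensions leftmost apart from the prescribed structure, the triple $(K,q,\rho)$ is an instantiation of $\cC$: each instantiation $(p^*)^{\infty\mapsto\vec i}$ is extended by $q$. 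If $K=\emptyset$, we build level $m$ by hand exactly as for $T$. Otherwise, consistency gives an $r\le q$ all of whose relevant tuples $r(\vec i)$ have color $\epsilon^\cC_{\vec\infty}$, and there are infinitely many levels at which such an $r$ exists, so one can be found above any prescribed length. We search for $r$, which is a computable search because the color and $T$ are computable and existence is guaranteed. We take the nodes $r(j,n)$ as level $m$, extending all remaining nodes leftmost to $\ell_r$; in Case 2 the coding node is supplied by $r(d,\cdot)$. The clause "no age-changes over $\ran(q)$" in $r\le q$ keeps $S$ aged-isomorphic, and the realised members of $\mathcal{Y}$ at level $m$ are exactly the $r(\vec i)$ with $\vec i\in K$, so they all receive color $\epsilon^\cC_{\vec\infty}$.

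Two further checks are needed. Every copy of $\Delta$ in $S$ extending $D$ has its top level at some such level $m$, and hence was colored correctly. The procedure is uniform, so $S$ is computable in $T$ from the finite data $\cC$ restricted to $\vec\infty$; only the existence claims use $\mathsf{ACA}_0$.

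The main obstacle is the verification that the required type of extension, namely splitting nodes for $j<e$, the coding node, and the immediate successor pattern $Y^+$, can be enforced simultaneously with the $r$ supplied by consistency. The plan is to encode these requirements into the definition of ordinary conditions, which already demand splitting and coding nodes in the right coordinates. The extra level needed to realise $Y^+$ and the next level of $S$ is then added by hand, as the successors of the chosen $r$-nodes.
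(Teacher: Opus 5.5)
Your architecture matches the paper's. You place $t_j=p^\cC_{\vec\infty}(j,\infty)$ above $x_j$ and extend the rest leftmost (the paper's $Z$). At each level where members of $\mathcal{Y}$ occur, you form an instantiation with $\rho(j,\cdot)=(j,\infty)$ on the current nodes of each cone $T_j$. Consistency then supplies, by a computable search, an $r\le q$ with the right colors, and you fill in the remaining nodes leftmost. The gap is that the triple $(K,q,\rho)$ you write down is not an instantiation, for two reasons.

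First, clause (4) demands $q\le (p^*)^{\infty\mapsto\vec i}$. Hence $\dom(q)\supseteq\{(j,n)\in\dom(p^*)\mid n\in\om\}$ with $q(j,n)\contains p^*(j,n)$, and the indices you give the new nodes must not clash with these. Such finite-index coordinates are generically present. Lemma~\ref{lem8} sets $p^{\cD}_{\vec i^*}=q^*_0=q^*\re E^*_0$, which keeps every coordinate of the witness $q^*$ that $\rho^*$ does not send to $\infty$. Consistency of $\cC$ says nothing about $q$'s that omit them. Your $q$ has domain $\{(j,n)\mid n<|N_j|\}$, every entry extends $t_j$, and $\rho\equiv(j,\infty)$. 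So in general it neither contains these coordinates nor extends the nodes $p^*(j,n)$. The paper keeps $\{(j,i)\in\dom(p^*)\mid i\ne\infty\}$ in $\dom(q)$ with $q(j,i)\contains p^*(j,i)$ and $\rho$ the identity there. It indexes the new nodes by $[b,b+m_j-1]$, with $b$ above all finite indices of $p^*$.

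Second, $q$ must itself be an ordinary condition, but you take its values in $\Lev^+_S(m-1)$. In Case 2 every entry $q(d,n)$ has to be a coding node, and since ranges are level sets they must all be the same coding node. Immediate successors of level $m-1$ are not coding nodes. Moreover, with all of $N_d$ placed at coordinate $d$, no $r\le q$ exists once $|N_d|\ge 2$, since distinct nodes cannot all extend to the unique coding node of length $\ell_r$.

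This is exactly your ``main obstacle,'' and it cannot be delegated to $r$ alone. The paper first extends $\Lev^+_S(m-1)$ by hand to a level set $V$ with $r_n(S)\cup V$ aged-isomorphic to $r_{n+1}(T)$, so that the coding node and the splitting nodes already lie in $\ran(q)$. Only then does it apply consistency. In Case 2, only the coding node of $V$ should occupy coordinate $d$.

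Finally, your inductive claim that $\Lev^+_S(m-1)$ is reached from the $t_j$ without age-changes cannot hold in an aged-isomorphic copy, since age-changes occur at the immediate successors of every level. It isn't needed, though: clause (4) is checked tuple by tuple. Restricting to tuples that extend to members of $\mathcal{Y}$ (your $K$, the paper's $F$) suffices.
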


\begin{proof}
We inductively construct  such an $S$.
First we must ensure that all members of $\mathcal{Y}$ that are in the tree $S$ we are building will extend the set $\{p^\cC_{\vec \infty}(j,\infty)\mid j\le d\}$.
Toward that end, 
let $A^+$ denote the immediate successors of the  maximal nodes in $A$.
Let $Y_0$ denote the set 
$\{p^\cC_{\vec \infty}(j,\infty)\mid j\le d\}$, and 
let $Y_1$ denote the set of leftmost extensions of the maximal nodes in $A^+\setminus X^+$ with the same length as the nodes in $Y_0$.
Let $Z=Y_0\cup Y_1$.
We point out that if  $\{p^\cC_{\vec \infty}(j,\infty)\mid j\le d\}=X^+$,  then $Z$ is exactly $A^+$.
Otherwise, $Z$ is a proper end-extension of $A^+$ with no age-change over $A^+$.

Let $T(n)$ denote the set of those nodes in the $n$-th level of $T$, and (using Ramsey space notation) let 
$r_n(T)$ denote $\bigcup_{m<n}T(m)$,  the
{\em $n$-th initial segment} of $T$.
Let $N\sse \om$ be the set of those  $n$ such that $r_{n+1}(T)$ contains at least one copy $\Delta'$ of $\Delta$  extending $D$, with the maximal nodes of $\Delta'$ being maximal in $T(n)$.
Equivalently, $N$ is the set of those $n\in\om$ where $T(n)$ contains at least one member of $\mathcal{Y}$.
Let $n_0=\min(N)$, and let $k$ be such that $A=r_k(T)$.
Note then that $X^+$ is a subset of $T(k)$.
If $k<n_0$, let $r_{n_0}(S)$ be an extension of $A$ such that  $S(k)$  is an end-extension of $Z$ with no age-changes over $Z$.
If $k=n_0$, then  $r_{n_0}(S)=A$.

Suppose $n\in N$ and  we have constructed  $r_n(S)$. 
We will build an instantiation of $\cC$ that will be used to build $S(n)$. Let $U$ denote the set of 
all  immediate successors (in $T$) of the nodes in   $S(n-1)$.
 Let $V$ be a level set extension of $U$ where $r_n(S) \cup V$ is aged-isomorphic to $r_{n+1}(T)$.
This implies
that
$V$ has no age-changes over $U$.
For each $j\le d$, let 
$m_j=|V_j|$ and let 
$V_j=\{t_{j,i}\mid i<m_j\}=V\cap T_j$.
Fix some $b>\max\{i<\om:\exists j<d\, ((j,i)\in\dom(p^\cC_{\vec \infty}))\}$, and 
let $q$ be a condition with 
\begin{enumerate}
    \item[$\bullet$]
$\dom(q)=\{(j,i)\in\dom(p^\cC_{\vec\infty}):i\ne\infty\}\cup
\bigcup_{j<d}\{j\}\times [b,b+m_j-1]$,
    \item[$\bullet$]
$q(j,i)\contains p^\cC_{\vec\infty}(j,i)$ for each $(j,i)\in \dom(p^\cC_{\vec\infty})$,
 and 
 \item[$\bullet$]
 $q(j,i)=t_{j,i-b}$ for $j\le d$ and $i\in [b,b+m_j-1]$.
 \end{enumerate} 
Define $\rho$ on $\dom(q)$ by 
setting $\rho(j,i)=(j,i)$ for each $(j,i)\in \dom(p^\cC_{\vec\infty})$ and 
$\rho(j,i)=(j,\infty)$ for $j\le d$ and  $i\in [b,b+m_j-1]$.

Let $F$ be the set  
consisting of all tuples $\vec{i}=(i_0,\dots,i_d)\in \prod_{j\le d} [b,b+m_j-1]$
such that the set of nodes $\{t_{j,i_j-b}\mid j\le d\}  $
has no age-changes over the set of nodes $\{p^\cC_{\vec \infty}(j,\infty)\mid j\le d\}$.
Then $(F,q,\rho)$ is an instantiation of $\cC$, since $q\le (p^\cC_{\vec\infty})^{\infty\mapsto \vec i}$ for each $\vec i\in F$.
Since $I^\cC$ is consistent, there is an extension $r\le q$ such that for each $\vec i\in F$, $\chi(r(\vec i))=\epsilon^\cC_{\vec\infty}$.
Let $W=\{r(j,b+i)\mid j\le d,\ i<m_j\}$.
Let $W'$ be the set of the leftmost extensions of the nodes in 
 $V\setminus \bigcup_{j\le d} V_j$ to the length of the nodes in $W$, and   let $W^*=W\cup W'$.
 Define $r_{n+1}(S)=r_{n}(S)\cup W^*$.
Then  each  member of $\mathcal{Y}$ in $S(n)$ has color $\epsilon^\cC_{\vec\infty}$.

Given that $r_{n+1}(S)$ has been constructed, to carry on the induction, let $n'$ be the least member in $N$ above $n$ and let $r_{n'}(S)$ be any extension of $r_{n+1}(S)$  that is aged-isomorphic to $r_{n'}(T)$.
Then repeat the above procedure to build $S(n')$.
To finish, we let $S=\bigcup_{n\in N}r_{n+1}(S)$.
Then by induction, for each $n\in N$, every member of $\mathcal{Y}$ in $S(n)$ has color $\epsilon^\cC_{\vec\infty}$.
\end{proof}

It remains now to show that there is a consistent configuration whose domain contains $\vec\infty$.  To do this, it suffices to find consistent configurations with any $\vec i^*$ in the domain, proceeding inductively on the number of times $\infty$ appears in $\vec i^*$.
The base case, where $\vec i^*\in \om^{d+1}$, is essentially the argument we gave for simple configurations with some additional bookkeeping.

\begin{lemma}[$\mathsf{ACA}_0$]\label{lem8}
If $\cC$ is a consistent configuration, $\pi\in(\omega\rightarrow\omega)^{d+1}$ is a tuple of injective functions, and $\vec i^*\in\om^{d+1}$ then there is a consistent configuration $\cD\le_\pi \cC$ with $\vec i^*\in I^\cD$.
\end{lemma}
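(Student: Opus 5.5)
We will mirror the proof of the corresponding lemma for simple configurations, adding the bookkeeping needed for instantiations. First let $\cC_0$ be the largest configuration $\le_\pi\cC$, namely the one with $I^{\cC_0}=\pi[I^\cC]$ (with $\pi_j(\infty)=\infty$) and $p^{\cC_0}_{\pi(\vec i)}=p^\cC_{\vec i}$, $\epsilon^{\cC_0}_{\pi(\vec i)}=\epsilon^\cC_{\vec i}$. We first check that $\cC_0$ is consistent. Given an instantiation $(K,q,\rho)$ of $\cC_0$, pull it back along $\pi$. Since $\pi_j$ is injective, we rename the finite coordinates of $\dom(q)$ by $\pi_j^{-1}$ where defined. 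Coordinates not in the range of $\pi_j$ are mapped by $\rho$ only to $(j,\infty)$ (or play no role), so we rename them to fresh integers. This produces an instantiation of $\cC$ whose required colors are the same, and consistency of $\cC$ transfers back. If $\vec i^*\in I^{\cC_0}$ we are done.

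Otherwise, let $\cC'$ be $\cC_0$ together with $\vec i^*$, where $p^{\cC'}_{\vec i^*}$ is the ordinary condition sending $(j,i^*_j)\mapsto x_j$ and $\epsilon^{\cC'}_{\vec i^*}=0$. If $\cC'$ is consistent we stop. If not, fix an instantiation $(J,q^*,\rho^*)$ witnessing inconsistency. Then only finitely many $\ell$ admit $r\le q^*$ with $\ell_r=\ell$ and $\chi(r(\vec i))=\epsilon^{\cC'}_{\rho^*(\vec i)}$ for all $\vec i\in J$. By consistency of $\cC_0$, some $\vec i\in J$ has $\rho^*(\vec i)=\vec i^*$. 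Because $\vec i^*\in\om^{d+1}$ and $\rho^*(j,n)\in\{(j,n),(j,\infty)\}$, this forces $\vec i=\vec i^*$, so $\vec i^*\in J$ and $\rho^*$ fixes every $(j,i^*_j)$.

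Now define $\cD$ as $\cC_0$ together with $\vec i^*$, setting $p^\cD_{\vec i^*}=q^*$ and $\epsilon^\cD_{\vec i^*}=1$. Clause (1) of Definition~\ref{def.configuration} holds because $q^*$ is an ordinary condition containing each $(j,i^*_j)$. Clause (2) holds because $q^*\le p^{\cC'}_{\vec i^*}$ and extension preserves membership in $\mathcal Y$. We also have $\cD\le_\pi\cC$.

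For consistency, take an instantiation $(K,q,\rho)$ of $\cD$. If no $\vec i\in K$ has $\rho(\vec i)=\vec i^*$, it is an instantiation of $\cC_0$ and we are done. Otherwise $\vec i^*\in K$ and $q\le q^*$. The key step, which is the main obstacle, is to merge the two instantiations. We extend $\rho$ to a map $\rho'$ on $\dom(q)$ that agrees with $\rho^*$ on $\dom(q^*)$, and take $K'=(K\cup J)\setminus\{\vec i^*\}$. This is possible because $q\le q^*$ gives $\dom(q^*)\sse\dom(q)$, and we must check that $\rho$ and $\rho^*$ do not conflict on shared coordinates. If they conflict, we rename the coordinates of $J$'s instantiation to fresh integers before merging; this is allowed because $\rho^*$ maps coordinates of $J$ either to themselves or to $\infty$. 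We then verify that $(K',q,\rho')$ is an instantiation of $\cC_0$. In particular, for $\vec i\in J$ we have $q\le q^*\le(p^{\cC_0}_{\rho^*(\vec i)})^{\infty\mapsto\vec i}$ by transitivity of $\le$, including the no-age-change clause.

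Consistency of $\cC_0$ then yields infinitely many $\ell$ with some $r\le q$, $\ell_r=\ell$, matching all colors indexed by $K'$. Each such $r$ is also $\le q^*$ and matches the colors on $J\setminus\{\vec i^*\}$. By the choice of $q^*$, for all but finitely many of these $\ell$ we get $\chi(r(\vec i^*))\ne 0$, i.e.\ $\chi(r(\vec i^*))=1=\epsilon^\cD_{\vec i^*}$. Hence infinitely many good levels remain, and $\cD$ is consistent. The whole argument is a single case split on an arithmetical statement, so it goes through in $\mathsf{ACA}_0$.
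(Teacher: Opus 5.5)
Your architecture matches the paper's: the colour-$0$ extension $\cC'$, a witness $(J,q^*,\rho^*)$ that must contain $\vec i^*$ with $\rho^*$ fixing its coordinates, the colour-$1$ configuration $\cD$, and a merge of an arbitrary instantiation $(K,q,\rho)$ of $\cD$ with the witness into an instantiation of $\cC_0$. The gap is in the merge, which you rightly call the crux: it does not work as you describe it.

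\emph{Renaming fixed coordinates.} Renaming a witness coordinate to a fresh integer is legitimate only when $\rho^*$ sends it to $(j,\infty)$. If $\rho^*(j,a)=(j,a)$, then for each $\vec i\in J$ using $(j,a)$ the index $\rho^*(\vec i)$ acquires a fresh entry and is no longer in $I^{\cC_0}$. Since $\dom(q^*)\sse\dom(q)$ and $\rho$ is otherwise arbitrary, $\rho$ may send such an $(j,a)$ to $(j,\infty)$ for some tuple of $K$. That clash has to be resolved on the $K$ side.

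\emph{Duplication rather than renaming.} Because $q\le q^*$ pins $\dom(q^*)$ inside $\dom(q)$, no coordinate can actually be moved. It must be duplicated: add a fresh $(j,a')$ with value $q(j,a)$, and redirect to it exactly those tuples, on whichever side, that read $(j,a)$ as $\infty$.

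\emph{Transport back.} The $r$ supplied by consistency of $\cC_0$ then has the right colours on the redirected tuples, not on $J$ and $K$ themselves. So your claim that $r$ ``matches the colors on $J\setminus\{\vec i^*\}$'' needs a transport step. Reading $r$ at the duplicates gives $r^\dagger\le q^*$ and $r''\le q$. Each value extends the required one, and the ranges are subsets of $\ran(r)$, so no age-changes appear. Both agree with $r$ at $\vec i^*$, because its coordinates are fixed by both $\rho$ and $\rho^*$ and are never duplicated. The witness applied to $r^\dagger$ gives $\chi(r(\vec i^*))=1$ at cofinitely many of the levels, and $r''$ witnesses consistency for $(K,q,\rho)$. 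With this repair your proof is complete.

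This is exactly where your one design change matters. The paper stores only $q^*\re E^*_0$ as $p^{\cD}_{\vec i^*}$, the restriction of $q^*$ to the coordinates fixed by $\rho^*$. The witness's $\infty$-coordinates are then not pinned in instantiations of $\cD$. They can be taken disjoint from $\dom(q)$, since a renamed copy of a witness is still a witness. They are then filled with leftmost extensions of $q^*$ to length $\ell_q$, which create no age-changes (in Case 2 one uses the coding node of $\ran(q)$). Your choice $p^{\cD}_{\vec i^*}=q^*$ saves that padding, since $q$ already extends $q^*$, but costs the two-sided duplication and transport above. The paper's assertion $E_0\contains E^*_0$ tacitly relies on the same $K$-side renaming.
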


\begin{proof}
Again, let $\cC_0$ be the condition $\leq_\pi\cC$ with $I^{\cC}=\pi[I^{\cC}]$, and assume $\vec i^*\not\in I^\cC$.
Let $\cC'$ be the configuration extending $\cC_0$ by setting $I^{\cC'}=I^{\cC_0}\cup\{\vec i^*\}$, letting
$p^{\cC'}_{\vec i^*}$ be the condition with 
$\dom(p^{\cC'}_{\vec i^*})=\{(j,i^*_j)\mid j\le d\}$ and $p^{\cC'}_{\vec i^*}(j,i^*_j)=
x_j$ for all $j\le d$, and letting 
 $\epsilon^{\cC'}_{\vec i^*}=0$.
If $\cC'$ is consistent, we are done, so suppose that $\cC'$ is not consistent.  Then there is an instantiation  $(K^*, q^*,\rho^*)$
of $\mathcal{C}'$ satisfying the following:
For all but finitely many $\ell<\om$, for all $r\le q^*$ with $\ell_r=\ell$, there is an $\vec{i}\in K^*$ such that $\chi(r(\vec{i}))\ne \epsilon^{\cC}_{\rho^*(\vec{i})}$.
 Since $\cC$ is consistent, we must have $\vec{i}^*\in K^*$ 
 and   
$p^{\cC'}_{\rho^*(\vec{i}^*)}=p^{\cC'}_{\vec{i}^*}$, $\rho^*(j,i^*_j)=(j,i^*_j)$ for all $j\le d$, and also $q^*\le p^{\cC'}_{\rho^*(\vec{i}^*)}$.
We partition $\dom(q^*)$ as $E^*_0\cup E^*_\infty$, where $E^*_0=\{(j,a)\mid \rho^*(j,a)=(j,a)\}$ and $E^*_{\infty}=\{(j,a)\mid \rho^*(j,a)=(j,\infty)\}$.
Let $q^*_0=q^*\re E^*_0$.

Let $\cD$ be the configuration extending $\cC_0$ with
$I^{\cD}=I^{\cC}\cup\{\vec{i}^*\}$, 
$p^{\cD}_{\vec{i}^*}=q^*_0$, and $\epsilon^{\cD}_{\vec{i}^*}=1$.
We claim that $\cD$ is consistent. 
Consider any 
instantiation $(K,q,\rho)$ of $\mathcal{D}$.
If  $\vec{i}^*\not\in K$, then the claim follows from the consistency of $\cC$, so we may assume that
$\vec{i}^*\in K$.

As before, we partition $\dom(q)=E_0\cup E_\infty$ where $E_0=\{(j,a) \mid \rho(j,a)=(j,a)\}$ and $E_\infty=\{(j,a) \mid \rho(j,a)=(j,\infty)\}$. 
Note that $E_0\contains E_0^*$, since $q\le q^*_0$ as $\vec i^*\in K$.
Without loss of generality, we will assume $E_\infty\cap E^*_\infty=\emptyset$; this is without loss of generality because when $(j,a)\in E_\infty$, we can replace $(j,a)\in\dom(q)$ with some $(j,a')$ not in $E^*_\infty$, adjusting $K$ and $\rho$ accordingly.
We then construct $q'$ so that $q'\leq q$ and $q'\le q^*$ as follows:
Set $\dom(q')=\dom(q)\cup E^*_\infty$.  For each $(j,a)\in \dom(q)$, let $q'(j,a)=q(j,a)$.
For $(j,a)\in E^*_\infty$, set $q'(j,a)$ to be the leftmost extension $q^*(j,a)$ of length $\ell_q$, with one possible exception:
If we are in 
 Case 2 where there is a coding node in the maximum level of $\Delta$, and 
$(d,a) \in E^*_\infty$, 
then  $q^*(d,a)$ is the coding node of length $\ell_{q^*}$. In this case we must define $q'(d,a)$ to be the coding node in $\ran(q)$.
In all cases, $\ran(q')\re \dom(q^*)$ 
has no age changes over  
$\ran(q^*)$.
 Thus, $q'\le q^*$;   it is easily seen that $q'\le q$ since $q'\re \dom(q)=q$.

Now  take $K'=K\cup K^*$ and $\rho'=\rho\cup \rho^*$.
Observe that $(K'\setminus\{\vec{i}^*\}, q', \rho'\upharpoonright (K'\setminus\{\vec{i}^*\}))$ is an instantiation of $\cC_0$, and therefore there are infinitely many levels $\ell$ so that there is an $r\leq q'$ with $\ell_r=\ell$ and so that for each $\vec{i}\in K'\setminus\{\vec{i}^*\}$, $\chi(r(\vec{i}))=\epsilon^{\cC_0}_{\vec{i}}$. 
However, since $q'\le q^*$,  it must be that at for all but finitely many of these $\ell$, we have $\chi(r(\vec{i}))=1$, showing that $\cD$ is consistent.
\end{proof}

\begin{lemma}[$\mathsf{ACA}_0$]\label{lem9}
If $\cC$ is a consistent configuration, $\pi\in(\omega\rightarrow\omega)^{d+1}$ is a tuple of injective functions, and $\vec i\not\in\cC$, then there is a consistent configuration $\cD\le_\pi \cC$ with $\vec i\in I^{\cD}$.
\end{lemma}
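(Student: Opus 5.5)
We will argue by induction on the number $t$ of coordinates $j\le d$ with $i_j=\infty$ in the target index $\vec i\in(\omega\cup\{\infty\})^{d+1}$. The case $t=0$ is exactly Lemma~\ref{lem8}. So assume $\vec i$ has $t>0$ infinite coordinates, that the lemma holds for all indices with fewer than $t$ infinite coordinates, and fix some $j_0$ with $i_{j_0}=\infty$. As before, we replace $\cC$ by $\cC_0\le_\pi\cC$ with $I^{\cC_0}=\pi[I^{\cC}]$. The plan is to approximate the coordinate $(j_0,\infty)$ by larger and larger finite sets of fresh indices, and to choose the color $\epsilon$ of $\vec i$ by a dichotomy analogous to the one in Lemma~\ref{lem8}.

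First, try $\epsilon=0$. Put $\vec i$ into $I^{\cC'}$ with $p_{\vec i}$ equal to the condition sending $(j,i_j)$ to $x_j$, and with color $0$. If this is consistent, we are done. If not, there is a failing instantiation $(K^*,q^*,\rho^*)$. Since $\cC_0$ is consistent, some $\vec k\in K^*$ has $\rho^*(\vec k)=\vec i$. As in Lemma~\ref{lem8}, we split $\dom(q^*)$ into the part $E^*_0$ that $\rho^*$ fixes and the part $E^*_\infty$ that $\rho^*$ sends to $\infty$. The natural candidate for $\cD$ is to attach to $\vec i$ the condition $p^\cD_{\vec i}\in P^\infty$ obtained from $q^*\re E^*_0$ as follows: for each $j$ with $i_j=\infty$, set $p^\cD_{\vec i}(j,\infty)$ to be a common (leftmost, or coding-node in Case~2) extension of the nodes $q^*(j,a)$ with $\rho^*(j,a)=(j,\infty)$, to length $\ell_{q^*}$. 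We then give $\vec i$ color $1$. The consistency verification should follow the same pattern as Lemma~\ref{lem8}: given an instantiation $(K,q,\rho)$ of $\cD$, rename indices so that $E_\infty\cap E^*_\infty=\emptyset$, and then amalgamate $q$ with $q^*$ by leftmost extensions to obtain some $q'\le q,q^*$. The inconsistency of $\cC'$ then forces color $1$ at all but finitely many levels.

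The main obstacle is that one failing instantiation $q^*$ only covers finitely many copies of the $(j_0,\infty)$ coordinate, whereas an instantiation of $\cD$ at $\vec i$ may use arbitrarily many copies, each of which must sit below a copy of the block witnessing failure. To handle this, we use the induction hypothesis. We first pass, via the $t-1$ case applied to the index $\vec i$ with $(j_0,\infty)$ replaced by fresh finite indices $n_1,\dots,n_N$, to a consistent configuration in which each finite approximation already carries a definite color. Consistency of the resulting configuration is a $\Pi^0_1$-in-$0''$-style property, and we compactly combine these approximations: we choose the color $\epsilon$ that occurs for unboundedly many $N$, which is possible since there are only two colors. We then set $p_{\vec i}(j_0,\infty)$ to be a node that lies below all of these approximations. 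This is the step where we expect the real work to lie. We must check that any instantiation with $M$ copies of $(j_0,\infty)$ embeds into an approximation with $N\ge M$ of the right color, using the pigeonhole over $N$ together with the fact that $\le$ on $P^\infty$ only requires no age-changes. Finally, since each step quantifies over instantiations, with a finite search and an ``infinitely many $\ell$'' clause, everything stays arithmetical, so the argument goes through in $\mathsf{ACA}_0$.
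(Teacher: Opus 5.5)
There is a genuine gap, and it sits at the step you flag as ``the real work.'' Your first attempt does not get off the ground. The nodes $q^*(j,a)$ with $\rho^*(j,a)=(j,\infty)$ all have length $\ell_{q^*}$, so they have a common extension only if they coincide. In any case, as you note, one failing instantiation controls only finitely many copies of $(j_0,\infty)$.

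The second attempt fails for three reasons.
\begin{enumerate}
\item Applying the $t-1$ case to fresh indices $(n_1,\vec{i}^-),\dots,(n_N,\vec{i}^-)$ attaches conditions to these indices that may all be different. There are infinitely many possible conditions, and a pigeonhole over the two colors does nothing to make them uniform.
\item Replacing them by something lying \emph{below} all of them goes in the wrong direction. Consistency of $\cD$ at $\vec i$ must hold for \emph{every} instantiation $q\le (p^{\cD}_{\vec i})^{\infty\mapsto\vec k}$. Weakening $p^{\cD}_{\vec i}$ admits extensions $q$ that extend none of the approximating conditions, so their consistency does not transfer. The weakest such choice, the nodes $x_j$ themselves, is exactly the one that can fail.
\item Approximations for different $N$ live in different configurations. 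Consistency of $\cD$ needs, for every $M$, $M$ copies carrying literally the same condition and color inside one consistent configuration that also extends $\cC_0$.
\end{enumerate}

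The missing idea is a canonical, bounded choice that forces stabilization. The paper proceeds as follows.
\begin{enumerate}
\item It moves $\cC$ onto odd indices in the $j_0$-th coordinate and fixes an enumeration of condition--color pairs in order type $\omega$.
\item It builds one chain $\cC_0\subseteq\cC_1\subseteq\cdots$, where $\cC_{i+1}$ adds the fresh index $(2i+2,\vec{i}^-)$, which has one fewer $\infty$. The added index gets the \emph{least} pair that keeps the configuration consistent; the induction hypothesis guarantees some pair works.
\item The union $\cC_\omega$ is consistent, since inconsistency is witnessed by a finite instantiation. So the induction hypothesis applies once more, adding $(0,\vec{i}^-)$ with some pair $(p^*,\epsilon^*)$.
\item After renaming the $j_0$-index, $(p^*,\epsilon^*)$ was an admissible choice at every stage. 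Hence every chosen pair precedes it in the enumeration. Since only finitely many pairs do, one pair is used at infinitely many fresh indices (the paper argues cofinitely many).
\item Give $\vec i$ that condition, with its $j_0$-index replaced by $\infty$, and that color. Any instantiation of $\cD$ using $M$ copies of $(j_0,\infty)$ can then be renamed onto $M$ of these indices, yielding an instantiation of the consistent $\cC_\omega$.
\end{enumerate}
This least-choice bound is what produces a single uniform condition for the $\infty$ coordinate. Neither a compactness argument over $N$ nor a weakening of conditions can substitute for it.
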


\begin{proof}
We proceed by induction on the number of indices of $\vec i$ which are $\infty$.
If that number is $0$, then this is Lemma \ref{lem8}. Without loss of generality, we may assume the first index of $\vec i$ is $\infty$, so $\vec i = (\infty,\vec{i}^-)$. 
We will use an additional map $\pi'$: $\pi'_0(a)=2a+1$, and $\pi'_i$ is the identity for $0<i$.

Order the conditions in order-type $\om$ as  $p_0,p_1,\dots$.
We now define a sequence of successive extensions $(\pi'\circ\pi)(\cC)$:
Let $\cC_0=(\pi'\circ\pi)(\cC)$; given 
$\cC_i$, let $\cC_{i+1}$ be the extension with $I^{\cC_{i+1}}=I^{\cC_i}\cup\{(2i+2,\vec{i}^-)\}$ and the pair $p^{\cC_{i+1}}_{(2i+2,\vec{i}^-)}, \epsilon^{\cC_{i=1}}_{(2i+2,\vec{i}^-)}$ chosen least such that $\cC_{i+1}$ is consistent.
The inductive hypothesis guarantees that there is some choice of $p^{\cC_{i+1}}_{(2i+2,\vec{i}^-)}$.

Let $\cC_\om=\bigcup_{i<\om} \cC_i$.
Since each $\cC_i$ is consistent, so is $\cC_\om$.
So there is an extension $\cC_{\om+1}$ with $(0,\vec{i}^-)\in I^{\cC_{\om+1}}$.
Consider $p^*=p^{\cC_{\om+1}}_{(0,\vec{i}^-)}$.
Since each $p^{\cC_{i+1}}_{(2i+2,\vec{i}^-)}$ was chosen least in the list so that the result was consistent, we must have 
$p^{\cC_{i+1}}_{(2i+2,\vec{i}^-)}$ appearing no later than $p^{\cC_{\om+1}}_{(0,\vec{i}^-)}$
in our list of conditions, for every $i$.
In particular, there must be cofinitely many stages  at which 
$p^{\cC_{i+1}}_{(2i+2,\vec{i}^-)}=p^{\cC_{\om+1}}_{(0,\vec{i}^-)}$ 
and $\epsilon^{\cC_{i+1}}_{(2i+2,\vec{i}^-)} = \epsilon^{\cC_{\om+1}}_{(0,\vec{i}^-)}$.

    Therefore, the configuration $\cD$ with $I^{\cD}= I^{\pi(\cC)}\cup\{\vec i\}$, $p^{\cD}_{\vec i}=p^*$, and $\epsilon^{\cD}_{\vec i}=\epsilon^{\cC_{\om+1}}_{(0,\vec{i}^-)}$ must be consistent. 
\end{proof}

To complete the proof of Theorem \ref{thm.general}, observe that we may apply Lemma \ref{lem9} finitely many times to show the existence of a consistent configuration containing $\vec \infty$. Being consistent is a $\Pi_2$ property, so we can search for one using $0^{(2)}$, and then, having found this configuration, apply Lemma \ref{lem.Dendhomog} to build the homogeneous copy $S$. \hfill $\square$

\subsection{Dovetailing}

To construct trees which are simultaneously homogeneous for finitely many diaries, we can dovetail the construction from Theorem  \ref{thm.general}.
Recall that  we allow 
 $\Delta_i$ to be the minimal node of a diary; in this case $\Delta^-_i$  is the empty diary and 
$\Delta_i$ is a splitting node.

\begin{lemma}[$\mathsf{ACA}_0$]\label{lem.3.12}
Let $\Delta_0,\dots,\Delta_{k-1}$ be non-empty initial segments of some diaries, and  let 
$\Delta^-_i$ be the immediate predecessor of $\Delta_i$, $i<k$.
Let $T$ be given and for each $i<k$, let $\chi_i$ be a coloring of all copies of $\Delta_i$ in $T$ into $m_i$ colors. Then there is a computable (in $T$) aged-isomorphic subtree $S$ and colorings $\chi^-_i$ of all copies of $\Delta^-_i$ in $S$ so that whenever $D$ is a copy of $\Delta_i$ in $S$ and $D^-_i$ is the initial copy of $\Delta^-_i$, we have $\chi_i(D)=\chi^-_i(D^-)$.
Further, $S$ is $0''$ in $T$.\end{lemma}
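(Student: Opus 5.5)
We will build $S$ level by level as a fusion of applications of Theorem \ref{thm.general}, interleaving all $k$ colorings and all possible copies $D$ of the $\Delta^-_i$ as they appear. Enumerate the levels of the tree under construction. At stage $m$ we will have an initial segment $A_m=r_{m}(S)$, aged-isomorphic to $r_m(T)$, together with an aged-isomorphic subtree $T_m$ of $T$ extending $A_m$. Thus $A_{m+1}$ will be $r_{m+1}(T_m)$ after the stage-$m$ refinements. Because $A_m$ is finite, there are only finitely many pairs $(i,D)$ with $i<k$ and $D$ a copy of $\Delta^-_i$ in $A_m$ whose maximum level lies in the maximum level of $A_m$. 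This includes, for $\Delta_i$ a splitting node on the spine, the empty $D$ at the stage where the relevant spine node appears. List these pairs as $(i_0,D_0),\dots,(i_{t-1},D_{t-1})$.

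For the colorings with $m_i>2$ colors, we first reduce to the two-color version of Theorem \ref{thm.general}. We apply it repeatedly to the $2$-colorings ``color $=c$ versus $\neq c$,'' or equivalently run the same configuration argument with $\epsilon\in m_i$; the proof of Theorem \ref{thm.general} never used that there were only two colors. Then, at stage $m$, we successively apply Theorem \ref{thm.general} with the fixed initial segment $A=A_m$. Starting from $T^0=T_{m-1}$, we obtain $T^{s+1}$ extending $A_m$ in which every extension of $D_s$ to a copy of $\Delta_{i_s}$ has a single $\chi_{i_s}$-color. Theorem \ref{thm.general} is stated for $T$, but everything there relativizes to any aged-isomorphic subtree, since such a tree is again a coding tree for $\mathbb{H}_{n+1}$ and the property $(*)$, levels and age-changes transfer along the aged-isomorphism. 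Homogeneity for $D_s$ is preserved when passing to further aged-isomorphic subtrees extending $A_m$. After the $t$ steps, we set $T_m=T^t$, let $A_{m+1}$ be the next level of $T_m$, and define $\chi^-_{i_s}(D_s)$ to be the color obtained. Finally, let $S=\bigcup_m A_m$. Any copy $D'$ of $\Delta_i$ in $S$ has its initial copy $D^-$ of $\Delta^-_i$ contained in some $A_m$ with maximal level at the top of $A_m$. Moreover, $D'\subseteq S\subseteq T_m$, so $\chi_i(D')=\chi^-_i(D^-)$, and $S$ is aged-isomorphic to $T$ since each $A_m$ is.

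The main obstacle is the complexity bound: $S$ must be $0''$ in $T$ uniformly, not merely one $0''$-step per stage. This is where the uniformity in Theorem \ref{thm.general} is used. Its last clause gives, from $\Delta$, $A$, $D$ and a code (index) for the current ambient tree, a $0''$-computation of an index for a subtree that is computable in $T$. So we never carry actual trees, only indices of $T$-computable trees. The oracle $0''$ (relative to $T$) computes $A_m$ from the index of $T_{m-1}$ using only $0'$ for the evaluation. It computes the list of pairs $(i_s,D_s)$ and then the index for $T^{s+1}$ from the index for $T^s$. All these steps are uniform in $m$ and $s$, so the whole sequence of indices, and hence $S$, is computable from $0''$.

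I expect the delicate point to be checking that the index returned really is of a tree computable in $T$ rather than in the previous tree's oracle. The $0''$ search in Theorem \ref{thm.general} is only used to locate the consistent configuration (a finite object), after which Lemma \ref{lem.Dendhomog} builds the subtree computably from the ambient one. The composition of finitely many, and then uniformly many, such computable reductions therefore remains computable in $T$. Each $T_m$ is computable in $T$, and the limit $S$ is $0''$ in $T$. Since each step is an instance of Theorem \ref{thm.general} plus arithmetic recursion of fixed jump depth, the whole argument is formalizable in $\mathsf{ACA}_0$.
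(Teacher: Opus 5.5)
Your proposal is correct and takes essentially the same route as the paper. The paper also builds $S$ level by level, applies Theorem~\ref{thm.general} in turn to each of the finitely many copies $D$ of the $\Delta^-_i$ at the current top level, reduces $m_i$ colors by repeated two-colorings, and uses $0''$ only to pick the index of the next $T$-computable subtree. The paper splits colors by halving ($\chi_i<m_i/2$ versus $\chi_i\ge m_i/2$) instead of your ``$=c$ versus $\neq c$''. Your parenthetical claim that the argument works verbatim with $\epsilon\in m_i$ is slightly overstated, since Lemma~\ref{lem8} passes from $\epsilon=0$ to $\epsilon=1$ using that there are only two colors; your main route does not depend on this.
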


\begin{proof}
We construct $S$ level by level. At each stage, we are committed to working in some computable subcopy $T_n$ of $T$; initially we set $T_0=T$ and take the initial level $A_0$ of $S$ to be the root of $T$.

Suppose we have constructed $A_n$ inside $T_n$. Suppose there is a copy $D$ of some $\Delta^-_i$ whose maximum level is contained in $A_n$. We consider the coloring $\chi'_i$ where $\chi'_i(D)=0$ if $\chi_i(D)<m/2$ and $\chi'_i(D)=1$ if $\chi_i(D)\geq m/2$. Then we apply Lemma \ref{lem9} to find a condition $p^{\cC}_{\vec\infty}(j,\infty)$, and Lemma \ref{lem.Dendhomog} then gives us a subtree $T_{n+1}$ of $T_n$ so that all copies of $\Delta$ extending $D$ have the same $\chi'_i$ color in $T_{n+1}$.

We apply this repeatedly, reducing the number of colors, and considering each copy of any $\Delta^-_i$ whose maximum level is contained in $A_n$, successively choosing subtrees in which the extensions use fewer colors. This requires only finitely many steps, and at the end we let $T_{n+1}$ be the final subtree. We then take $A_{n+1}=r_{n+1}(T_{n+1})$. Each individual tree $T_{n+1}$ is computable, and identifying which computable tree we should continue the construction in is computable from $0''$.
\end{proof}

\begin{theorem}[$\mathsf{ACA}_0$]\label{thm.penultimatestep}
   Given a copy of $\mathbb{H}_{n+1}$, a finite list of diaries $\Delta_0,\ldots,\Delta_{k-1}$, and colorings of all copies of each $\Delta_i$ in $\mathbb{H}_{n+1}$ into $m_i$ colors, there is a subcopy $\mathbb{H}'$ of $\mathbb{H}_{n+1}$ in which, for each $i<k$, every copy of $\Delta_i$ has the same color. Furthermore, $\mathbb{H}'$ is $0^{(2\max\{|\Delta_i|\}-1)}$ in the colorings where $|\Delta_i|$ is the number of levels in $\Delta_i$.
\end{theorem}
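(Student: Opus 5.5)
We propose to argue by induction on the height of the diaries, peeling off one level at a time with Lemma~\ref{lem.3.12}. For a diary $\Delta$ with $|\Delta|=L$ levels, write $\Delta\re\ell$ for its initial segments, $\ell\le L$. Since the list is finite, we may close it under initial segments: the family $\mathcal{F}_\ell$ consists of all $\Delta_i\re\ell$ with $\ell\le|\Delta_i|$. Set $M=\max_i|\Delta_i|$. We run $M$ rounds, processing initial segments from the top level downward.

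In round $1$ we consider the segments $\Delta_i$ themselves, with $|\Delta_i|=M$, colored by $\chi_i$. Lemma~\ref{lem.3.12}, applied in the computable copy $T$ coding the given $\mathbb{H}_{n+1}$, yields an aged-isomorphic subtree $S_1$ that is $0''$ in $T$, together with induced colorings $\chi_i^-$ on copies of $\Delta_i\re(M-1)$. These colorings are computable in $0''$: the color of a copy of $\Delta_i\re(M-1)$ is the common color of its extensions in $S_1$, found by searching for one such extension. Diaries of smaller height are added at the round in which their top level is reached. In round $t$ we thus have a tree $S_{t-1}$ and colorings of the level-$(M-t+1)$ segments, all computable in $0^{(2(t-1))}$. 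Relativizing Lemma~\ref{lem.3.12} gives $S_t$ and colorings of the level-$(M-t)$ segments, computable in $0^{(2t)}$.

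After rounds $1,\dots,M-1$, only the level-$1$ segments remain. These are single splitting nodes on the spine, so $\Delta^-$ is empty and a color is attached to each copy of the single node. For this step we use the $0'$ clause of Theorem~\ref{thm.general} instead of the $0''$ bound; since the dovetailing of Lemma~\ref{lem.3.12} is then performed over a $0'$-search, the final step costs only one jump. The total is $2(M-1)+1=2M-1$ jumps. Finally, we take $\mathbb{H}'$ to be the set of coding nodes of the resulting aged-isomorphic tree $S_M$; aged-isomorphism guarantees that $\mathbb{H}'$ is a copy of $\mathbb{H}_{n+1}$. Chasing the chain of induced colorings shows that every copy of $\Delta_i$ in $S_M$ has the color attached to its base splitting node, and that color is constant.

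The main obstacle is the bookkeeping of relativization. Lemma~\ref{lem.3.12} is stated for computable $T$ and computable colorings, and we must check that it relativizes uniformly: given $S_{t-1}$ and the colorings as oracle computations from $0^{(2(t-1))}$, the new tree $S_t$ should be computable from two further jumps. The $\Pi_2$ search for a consistent configuration, relative to the oracle, supplies exactly these two jumps. We must also check that $S_t$ stays inside $S_{t-1}$, which we get by running the construction in $S_{t-1}$ viewed as a copy of $T$ via the aged-isomorphism. Copies of a diary inside $S_t$ correspond to copies inside $S_{t-1}$, so the induced colorings remain well defined. The remaining point is to confirm that the last round can be done with one jump, since the induced colorings at that round are already $0^{(2M-2)}$ and Theorem~\ref{thm.general}'s $0'$ clause relativizes. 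All of this is a finite iteration of fixed length $2M-1$, so it is carried out in $\mathsf{ACA}_0$ for each fixed list.
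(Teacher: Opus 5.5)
Your proposal is correct and is essentially the paper's argument. You apply Lemma~\ref{lem.3.12} repeatedly, so that the color of each $\Delta_i$ is determined by shorter and shorter initial segments, at two jumps per level; the final level, the splitting nodes on the spine, costs only one jump, for a total of $2M-1$. You simply spell out the relativization and induced-coloring bookkeeping that the paper's short proof leaves implicit.
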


\begin{proof}
We apply the preceding lemma repeatedly, successively getting copies of $\mathbb{H}_n$ in which the color of each $\Delta_i$ is determined by shorter and shorter initial segments, until we get a copy in which each copy of $\Delta_i$ has the same color.
The last step of the proof involves colorings of the splitting nodes on the spine.  This is an infinite pigeonhole principle and only requires $0'$.
Hence, $\mathbb{H}'$ is  $0^{(2\max\{|\Delta_i|\}-1)}$ in the colorings rather than $0^{(2\max\{|\Delta_i|\})}$.
\end{proof}

Theorem \ref{thm.upperbound} follows immediately from 
applying Theorem \ref{thm.penultimatestep} to the finitely many diaries representing $G$.

\section{Remarks and future directions}

In this paper, we focused on   Henson graphs for the sake of clarity.
But it should be pointed out that 
the methods in Section 3 can be easily adapted to any infinite structure for which coding trees and forcing methods have been used to prove finite big Ramsey degrees. 
In particular, our methods will yield finite big Ramsey degrees in ACA$_0$ for
free amalgamation classes with finitely many binary relations 
\cite{MR4457343}, for binary relational structures satisfying the SDAP$^+$ \cite{CDP1, CDP2},
and for finite chains in the binary branching pseudotree \cite{CDW}.
Further, the  methods in Section 3 can be applied on diaries for copies of infinite structures such as in   \cite{Dobrinen_SDAP},
\cite{DobrinenRado19},
 and \cite{Dobrinen/Zucker23}. 
 

As mentioned in the Introduction,
there is a gap between
the lower bound of $0^{(|G|-1)}$
in Theorem 39 of  \cite{CDM} and
our upper bound of 
$0^{(2\delta(G,n)-1)}$ in Theorem
\ref{thm.upperbound}.
For example,
in  $\mathbb{H}_3$ there are two diaries for  edges, $K_2$,
and five diaries for disconnected pairs, $\overline{K}_2$; 
each of these diaries  has four levels. 
Thus, given a computable coloring
of the  copies of $K_2$ ($\overline{K}_2$) into finitely many colors,
the complexity of a copy   of $\mathbb{H}_3$ in which each diary for $K_2$  ($\overline{K}_2$) has one color lies somewhere between $0'$ and $0^{(7)}$.
For $\overline{K}_2$ we can actually do slightly better, as Theorem 22 in \cite{CDM} yields $0''$ as a lower bound.
In fact, Theorem 22 in \cite{CDM} yields a lower bound of $0^{(2|G|-2)}$ for anticliques $G$ in any Henson graph.  It will take significant work, likely in terms of improving both the lower (as done in \cite{CDM}) and upper bounds (as done here),  to provide a precise bound.

We mention that Hubi\v{c}ka found a purely combinatorial proof that $\mathbb{H}_3$ has  finite big Ramsey degrees in \cite{Hubicka_CS20} by using
 the Carlson--Simpson Lemma. This proof does not provide tight upper bounds for the big Ramsey degrees. But this proof does show for the generic partial order with extensions has finite big  Ramsey degrees; something it seems that cannot shown via coding trees and forcing methods.  
 

 Liu and Patey recently proved that the Carlson--Simpson Lemma holds in ACA$_0$ in \cite{LP}.   By combining \cite{Hubicka_CS20} and \cite{LP} we have another proof that $\mathbb{H}_3$ has  finite big Ramsey degrees is provable in ACA$_0'$. At this point no one has calculated the number of jumps needed to bound the complexity of $\mathbb{H}'$, as in Theorem~\ref{thm.penultimatestep}, from the proof using the Carlson--Simpson Lemma.  We suspect they are higher than the ones we have calculated.  But we will leave that as a question. In terms of indivisibility, Liu and Patey were able to answer Questions~47 and 48  but not Question~49 of \cite{CDM} with this technique.

\bibliographystyle{amsplain}
\bibliography{CDT}

\end{document}